\theoremstyle{plain}
\newtheorem{theorem}{Theorem}[section]
\newtheorem{corollary}[theorem]{Corollary}
\newtheorem{proposition}[theorem]{Proposition}
\theoremstyle{definition}
\newtheorem{definition}[theorem]{Definition}
\newtheorem{example}[theorem]{Example}
\newtheorem{construction}[theorem]{Construction}
\newtheorem*{notations}{Notations}
\theoremstyle{remark}
\newtheorem{remark}[theorem]{Remark}
\newtheorem*{acks}{Acknowledgments}
\DeclareMathOperator{\hess}{hess}
\DeclareMathOperator{\Hess}{Hess}
\DeclareMathOperator{\Ann}{Ann}
\DeclareMathOperator{\Hilb}{Hilb}
\DeclareMathOperator{\GCD}{GCD}
\DeclareMathOperator{\Hom}{Hom}
\newcommand{\KK}{\mathbb{K}}
\newcommand{\PP}{\mathbb{P}}
\newcommand{\gen}[1]{\langle#1\rangle}
\begin{document}

\title{CW-complex Nagata Idealizations}

\author{Armando Capasso} \address{Scuola Politecnica e delle Scienze
  di Base, Universit\`a degli Studi di Napoli ``Federico II'', corso Protopisani Nicolangelo 70,
  Napoli (Italy), C.A.P. 80146; \url{armando.capasso@unina.it}}
\author{Pietro De Poi} \address{Dipartimento di Scienze Matematiche,
  Informatiche e Fisiche, Universit\`a degli Studi di Udine, via delle
  Scienze 206, Udine (Italy) C.A.P. 33100;
  \url{pietro.depoi@uniud.it}} \author{Giovanna Ilardi}
\address{Dipartimento di Matematica ed Applicazioni ``R. Caccioppoli'', Universit\`a degli Studi di
  Napoli ``Federico II'', via Cintia
  21, Napoli (Italy) C.A.P. 80126; \url{giovanna.ilardi@unina.it}}

\thanks{P.D.P. \& G.I. are members of INdAM - GNSAGA and P.D.P is
  supported by PRIN2017 ``Advances in Moduli Theory and Birational
  Classification''}

\subjclass[2010]{Primary 13A30, 05E40; Secondary 57Q05, 13D40, 13A02,
  13E10}

\keywords{Lefschetz properties, Artinian Gorenstein Algebra, Nagata
  idealization, CW-complex}

\date{\today}

\begin{abstract}
  We introduce a  construction which allows us to identify the
  elements of the skeletons of a CW-complex $P(m)$ and the monomials
  in $m$ variables. From this, we infer that there is a bijection
  between finite CW-subcomplexes of $P(m)$, which are quotients of
  finite simplicial complexes, and certain  bigraded standard Artinian
  Gorenstein algebras, generalizing previous constructions of Faridi and ourselves.
  
  We apply this to a generalization of Nagata idealization for level
  algebras.  These algebras are standard graded Artinian algebras
  whose Macaulay dual generator is given explicitly as a bigraded
  polynomial of bidegree $(1,d)$. We consider the algebra associated
  to polynomials of the same bidegree $(d_1,d_2)$.
\end{abstract}

\maketitle

\section*{Introduction}
\markboth{Introduction}{Introduction}

Let $X=V(f)\subset\PP^N_{\KK}$ be a hypersurface, where the underlying
field $\KK$ has characteristic $0$; the \emph{Hessian determinant of
  $f$} (which we call \emph{the Hessian of $f$} or
\emph{the Hessian of $X$}) is the \emph{determinant of the Hessian matrix
  of $f$}.

Hypersurface with vanishing Hessian were studied for the first time in
1851 by O. Hesse; he wrote two papers (\cite{OH:1,OH:2}) according to
which these hypersurfaces should be necessarily cones. In  1876
Gordan and Noether (\cite{G:N}) proved that Hesse's claim is true for
$N\leq3$, and it is false for $N\geq4$. They and Franchetta classified
all the counterexamples to Hesse's claim in $\PP^4$
(see~\cite{G:N,F:A,GR,G:R:S}). In 1900, Perazzo classified cubic
hypersurfaces with vanishing Hessian for $N\leq6$ (\cite{P:U}). This
work was studied and generalized in~\cite{GR:RF}, and the problem is
still open in  spaces of  higher dimension.

Hessians of higher degree were introduced in~\cite{M:W} and used to
control the so called \emph{Strong Lefschetz Properties} (for short,
\emph{SLP}). The Lefschetz properties have attracted a great attention
in the last years. The basic papers of the algebraic theory of
Lefschetz properties were the original ones of
Stanley~\cite{SR:1,SR:2,SR:3} and the book of Watanabe and
others~\cite{HMMNWW}.

An algebraic tool that occurs frequently in these papers is the
\emph{Nagata Idealization}: it is a tool to convert any module $M$
over a (commutative) ring (with unit) $R$  to  an ideal of another ring
$R\ltimes M$. The starting point is the isomorphism between the
idealization of an ideal $I=(g_0,\dotsc,g_n)$ of $\KK[u_1,\dotsc,u_m]$
and its level algebra see~\cite[Definition 2.72]{HMMNWW}. In this way,
the new ring is a \emph{Standard Graded Artinian Gorenstein Algebra}
(\emph{SGAG algebra}, for short). An explicit formula for the Macaulay
generator $f$ is:
\begin{equation*}
  f=x_0g_0+\dotsm+x_n g_n\in\KK[x_0,\dotsc,x_n,u_1,\dotsc,u_m]_{(1,d)}.
\end{equation*}
A generalization of this construction is to consider polynomials
of the form:
\begin{equation*}
  f=x_0^dg_0+\dotsm+x_n^dg_n\in\KK[x_0,\dotsc,x_n,u_1,\dotsc,u_m]_{(d,d+1)};
\end{equation*}
these are called \emph{Nagata polynomials of degree $d$}. The
Lefschetz properties for the relevant associated algebras $A$, the
geometry of \emph{Nagata hypersurfaces of degree $d$}, the interaction
between the combinatorics of $f$ and the structure of $A$ were studied
in~\cite{CGIM}, where the $g_i$'s are square free monomials, using a
simplicial complex associated to $f$.

In this paper we use the \emph{CW-complexes}, to study Nagata polynomials of bidegree
$(d_1,d_2)$. We study the Hilbert vector and we give a complete
description of the ideal $I$ for every case, also if the $g_i$'s are
not square free monomials.

The geometry of the Nagata hypersurface is very similar to the
geometry of the hypersurfaces with vanishing Hessian.

More precisely, we introduce a new Construction \ref{con:1} which
allows us to identify each (monic) monomial of degree $d$ in $m$
variables with an element of the $(d-1)$-skeleton of a 
CW-complex that we  call $P(m)$. This CW-complex is constructed
by generalizing the construction introduced in \cite{F:S} which
associates to a (monic) square-free monomial in $m$ variables of
degree $d$ a unique $(d-1)$-cell of the simplex of dimension $m-1$,
and vice versa.   We consider an $h$-power $u_i^h$ as a
product of $h$ linear forms: $\tilde{u}_1\dotsm\tilde{u}_h$; this
corresponds to a $(h-1)$-simplex, and we identify all the
$\delta$-faces, with $\delta<h-1$, of this simplex to just one
$\delta$-face, recursively, starting from $\delta=0$ to $\delta=h-2$:
for $\delta=0$ we identify all the points to one, then if $\delta=1$
we obtain a bouquet of $h$-circles, and we identify all these circles,
and so on. Generalizing this construction to a general monic monomial
and attaching the corresponding CW-complexes along the common
skeletons, we obtain $P(m)$.

The paper is organized as follows: in Section \ref{sec:1} we recall
some generalities about graded Artinian Gorenstein Algebras and
Lefschetz Properties, with their connections with the vanishings of
higher order Hessians. In Section \ref{sec:2} we recall  what the
Nagata idealization is, what we intend for a higher Nagata idealization
and we show its connection with the Lefschetz Properties for
bihomogeneous polynomials. Section \ref{sec:3} is the core of this
article. After recalling 
generalities about bigraded algebras and the topological definitions
that we need, we give the construction of the CW-complex $P(m)$; then,
we apply it to the  Nagata polynomials ( Definition
\ref{def:nag}) in Theorems \ref{thm:1} and \ref{thm:2}, which give
Theorem \ref{thm:1} a precise description of the Artinian Gorenstein
Algebra associated to a Nagata polynomial and Theorem \ref{thm:2} the
generators of the annihilator of the polynomial. We show that from
these theorems  a generalization of the principal results of
\cite{CGIM} follows:  Corollaries \ref{cor:1} and \ref{cor:2}.

We think that the study of the Nagata hypersurfaces can be---among
other things---a  useful tool for the classification of the
hypersurfaces with vanishing Hessian in $\mathbb P^n$.
\begin{acks}
  We thank the anonymous referee for the careful reading and the valuable suggestions.
\end{acks}
\begin{notations}
  In this the paper we fix the following notations and assumptions:
  \begin{itemize}
  \item  $\KK$ is  a field  of characteristic $0$.
  \item $R:=\KK[x_0,\dotsc,x_n]$ will always be the ring of
    polynomials in $n+1$ variables $x_0,\dotsc,x_n$.
  \item $Q:=\KK[X_0,\dotsc,X_n]$ will be the the 
    ring of differential operators of $R$,
    where $\displaystyle X_i=\pdv{x_i}$.
  \item The subscript of a graded $\KK$-algebra will indicate  the part of that degree;  $R_d$ is the
    $\KK$-vector space of the homogeneous polynomials of degree $d$,
    and $Q_{\delta}$ the $\KK$-vector space of the homogeneous
    differential operators of order $\delta$.
  \end{itemize}
\end{notations}


\section{Graded Artinian Gorenstein Algebras and Lefschetz Properties}\label{sec:1}
\markboth{Graded Artinian Gorenstein Algebras and Lefschetz
  Properties}{Graded Artinian Gorenstein Algebras and Lefschetz
  Properties}

\subsection{Graded Artinian Gorenstein Algebras are Poincar\'e
  Algebras}

\begin{definition}
  Let $I$ be a homogeneous ideal of $R$ such that
  $\displaystyle A=R/I=\bigoplus_{i=0}^d A_i$ is a graded Artinian
  $\KK$-algebra, where $A_d\neq 0$.
   The integer $d$ is the \emph{socle degree of $A$}. The algebra $A$ is said \emph{standard}
  if it is generated in degree $1$.  Setting
  $h_i=\dim_{\KK}A_i$, the vector $\Hilb(A)=(1,h_1,\dotsc,h_d)$ is
  called \emph{Hilbert vector of $A$}.  Since $I_1=0$, then $h_1=n+1$
  is called \emph{codimension of $A$}.
\end{definition}
We also recall the following definitions.
\begin{definition}
  A graded Artinian $\KK$-algebra $A=\bigoplus_{i=0}^d A_i$ is a
  \emph{Poincar\'e algebra} if $\cdot\colon A_i\times A_{d-i}\to A_d$
  is a \emph{perfect pairing} for $i\in\{0,\dotsc,d\}$.
\end{definition}

\begin{definition}\label{th2.1}
  A graded Artinian $\KK$-algebra $A$ is \emph{Gorenstein} if (and
  only if) $\dim_{\mathbb{K}}A_d=1$ and it is a Poincar\'e algebra.
\end{definition}

\begin{remark}
  The Hilbert vector of a Poincar\'e algebra $A$ is \emph{symmetric
    with respect to
    $h_{\textstyle\left\lfloor\frac{d}{2}\right\rfloor}$}, that is
  $\Hilb(A)=(1,h_1,h_2,\dotsc,h_2,h_1,1)$.
  \begin{flushright}
    $\Diamond$
  \end{flushright}
\end{remark}

\subsection{Graded Artinian Gorenstein Quotient Algebras of $Q$}

For any $d\geq\delta\geq0$ there exists a natural $\KK$-bilinear map
$B\colon R_d\times Q_{\delta}\to R_{d-\delta}$ defined by
differentiation
\begin{equation*}
  B(f,\alpha)=\alpha(f)
\end{equation*}
\begin{definition}
  Let $I=\gen{f_1,\dotsc, f_\ell}$---where $f_1,\dotsc, f_\ell$ are
  forms in $R$---be a finite dimensional $\KK$-vector subspace of $R$.
  The \emph{annihilator of $I$ in $Q$} is the following homogeneous
  ideal
  \begin{equation*}
    \Ann(I):=\{\alpha\in Q\mid\forall f\in I,\alpha(f)=0\}.
  \end{equation*}
  In particular, if $I$ is generated by a homogeneous element $f$, we
   write  $\Ann(I)=\Ann(f)$.
\end{definition}
Let $A=Q/\Ann(f)$, where $f$ is homogeneous. By construction $A$ is a
standard graded Artinian $\KK$-algebra; moreover $A$ is Gorenstein.
\begin{theorem}[\cite{Mac2}, \S 60ff, \cite{M:W}, Theorem 2.1]\label{th2.2}
  Let $I$ be a homogeneous ideal of $Q$ such that $A=Q/I$ is a
  standard Artinian graded $\KK$-algebra. Then $A$ is Gorenstein if
  and only if there exist $d\geq1$ and $f\in R_d$ such that
  $A\cong Q/\Ann(f)$.
\end{theorem}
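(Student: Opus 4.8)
The plan is to prove both implications through the classical machinery of \emph{Macaulay inverse systems} (as in the cited references), the engine being the apolarity pairing
$B\colon R_k\times Q_k\to R_0=\KK$, $B(g,\alpha)=\alpha(g)$, for each $k\ge 0$. The first thing I would record is that $B$ is a \emph{perfect} pairing: evaluating on monomial bases one finds $\mathbf X^{\mathbf b}(\mathbf x^{\mathbf a})=\mathbf a!\,\delta_{\mathbf a,\mathbf b}$ for multi-indices with $|\mathbf a|=|\mathbf b|=k$, and the scalars $\mathbf a!$ are nonzero precisely because $\operatorname{char}\KK=0$; hence $R_k$ and $Q_k$ are canonically dual, and for any subspace $W\subseteq Q_k$ its $B$-orthogonal $W^{\perp}\subseteq R_k$ satisfies $\dim_\KK W^{\perp}=\dim_\KK R_k-\dim_\KK W$.

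For the implication ``$A\cong Q/\Ann(f)$ with $f\in R_d$ $\Rightarrow$ $A$ Gorenstein'' I would argue directly. Every differential operator of order $>d$ kills $f$, so $\bigoplus_{\delta>d}Q_\delta\subseteq\Ann(f)$ and $A=Q/\Ann(f)$ is Artinian with $A_k=0$ for $k>d$; standardness is inherited from $Q$. Writing $f=\sum c_{\mathbf a}\mathbf x^{\mathbf a}$ with some $c_{\mathbf a}\ne 0$, the operator $\mathbf X^{\mathbf a}$ gives $\mathbf X^{\mathbf a}(f)=c_{\mathbf a}\,\mathbf a!\ne 0$, so $A_d\ne 0$; and since for $\alpha\in Q_d$ the element $\alpha(f)$ lies in $R_0=\KK$, the map $\alpha\mapsto\alpha(f)$ identifies $A_d=Q_d/\Ann(f)_d$ with a nonzero subspace of $\KK$, whence $\dim_\KK A_d=1$. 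Finally, for $0\le k\le d$ and $0\ne\bar\alpha\in A_k$ we have $\alpha(f)\ne 0$ in $R_{d-k}$, so by perfectness of $B$ on $R_{d-k}\times Q_{d-k}$ there is $\beta\in Q_{d-k}$ with $\beta\bigl(\alpha(f)\bigr)=(\beta\alpha)(f)\ne 0$, i.e. $\bar\beta\bar\alpha\ne 0$ in $A_d$; thus multiplication $A_k\times A_{d-k}\to A_d$ is a perfect pairing and $A$ is a Poincar\'e algebra with one-dimensional socle, i.e. Gorenstein by Definition \ref{th2.1}.

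For the converse, let $A=Q/I$ be standard graded Artinian Gorenstein of socle degree $d\ge 1$, so $\dim_\KK A_d=1$. Composing $Q_d\twoheadrightarrow A_d$ with an isomorphism $A_d\xrightarrow{\ \sim\ }\KK$ yields a functional $\varphi\colon Q_d\to\KK$ with $\ker\varphi=I_d$; by perfectness of $B$ on $R_d\times Q_d$ there is a unique $f\in R_d$ with $\varphi(\alpha)=\alpha(f)$ for all $\alpha\in Q_d$, so $I_d=\Ann(f)_d$. I then claim $I_k=\Ann(f)_k$ for all $k$, which gives $A\cong Q/\Ann(f)$. The inclusion $I_k\subseteq\Ann(f)_k$ is routine: for $\alpha\in I_k$ and $\beta\in Q_{d-k}$ one has $\beta\alpha\in I_d=\Ann(f)_d$, so $\beta\bigl(\alpha(f)\bigr)=(\beta\alpha)(f)=0$; since $\alpha(f)\in R_{d-k}$ and $B$ is perfect, $\alpha(f)=0$. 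For the reverse inclusion, take $\alpha\in Q_k$ with $\alpha(f)=0$; then for every $\beta\in Q_{d-k}$, $(\alpha\beta)(f)=\beta\bigl(\alpha(f)\bigr)=0$, so $\alpha\beta\in\Ann(f)_d=I_d$, i.e. $\bar\alpha\cdot A_{d-k}=0$ in $A_d$; since the pairing $A_k\times A_{d-k}\to A_d$ is perfect, the map $A_k\to\Hom_\KK(A_{d-k},A_d)$ is injective, forcing $\bar\alpha=0$, i.e. $\alpha\in I_k$.

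The main obstacle is exactly that last step of the converse: converting the single vanishing $\alpha(f)=0$ into membership $\alpha\in I$. It is here, and only here, that one must invoke the \emph{full} Gorenstein hypothesis (that $A$ is a Poincar\'e algebra), not merely that its socle is one-dimensional; everything else reduces to linear algebra with the apolarity pairing, whose perfectness in turn rests on $\operatorname{char}\KK=0$. One should also note that the degenerate case $d=0$, where $A=\KK$, is excluded by the hypothesis $d\ge 1$, or handled separately.
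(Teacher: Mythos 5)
The paper does not prove this theorem; it is quoted as a known result with references to Macaulay and to Maeno--Watanabe, and your argument is precisely the standard inverse-systems (double annihilator) proof given there. Your proof is correct: the perfectness of the apolarity pairing in characteristic $0$, the identification $I_d=\Ann(f)_d$ via the socle functional, and the use of the Poincar\'e-pairing part of the paper's Definition~\ref{th2.1} to upgrade $\alpha(f)=0$ to $\alpha\in I$ are all exactly the right ingredients. The only cosmetic gap is that in the forward direction you verify nondegeneracy of $A_k\times A_{d-k}\to A_d$ in one argument only; the symmetry of the statement under $k\leftrightarrow d-k$ (or a dimension count) supplies the other, so nothing essential is missing.
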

\begin{remark}
  Using the notation as above, $A$ is called the \emph{SGAG
    $\KK$-algebra associated to $f$}. The socle degree $d$ of $A$ is
  the degree of $f$ and the codimension is $n+1$, since $I_1=0$.
  \begin{flushright}
    $\Diamond$
  \end{flushright}
\end{remark}

\subsection{Lefschetz Properties and the Hessian Criterion}

Let $\displaystyle A=\bigoplus_{i=0}^d A_i$ be a graded Artinian
$\KK$-algebra.
\begin{definition}
  If there exists an $L\in A_1$ such that:
  \begin{enumerate}
  \item The multiplication map $\cdot L\colon A_i\to A_{i+1}$ is of
    maximal rank for all  $i$, 
    then
    $A$ has the \emph{Weak Lefschetz Property} (\emph{WLP}, for
    short);
  \item The multiplication map $\cdot L^k\colon A_i\to
    A_{i+k}$ is of maximal rank for all $i$ 
    and $k$, 
    then
    $A$ has the \emph{Strong Lefschetz Property} (\emph{SLP}, for
    short);
  \end{enumerate}
\end{definition}
\begin{definition}
  Let $A$ be the SGAG $\KK$-algebra associated to an element $f\in
  R_d$, and let $\mathcal{B}_k=\{\alpha_j\in A_k\mid
  j\in\{1,\dotsc,\sigma_k\}\}$ be an ordered $\KK$-basis of
  $A_k$. The \emph{$k$-th Hessian matrix} of
  $f$ with respect to $\mathcal{B}_k$ is
  \begin{equation*}
    \Hess_f^k=\left(\alpha_i\alpha_j(f)\right)_{i,j=1}^{\sigma_k}.
  \end{equation*}
  The \emph{$k$-th Hessian} of $f$ with respect to $\mathcal{B}_k$ is
  \begin{equation*}
    \hess_f^k=\det\left(\Hess_f^k\right).
  \end{equation*}
\end{definition}

\begin{theorem}[\cite{W:J} Theorem 4]
  An element $L=a_0X_0+\dotsm+a_n X_n\in
  A_1$ is a \emph{strong Lefschetz element} of
  $A$ if and only if
  $\hess_f^k(a_0,\dotsc,a_n)\neq0$ for all $\displaystyle
  k\in\left\{0,\dotsc,\left\lfloor\frac{d}{2}\right\rfloor\right\}$. In
  particular, if for some $k$ one has $\hess_f^k=0$, then
  $A$ does not have  SLP.
\end{theorem}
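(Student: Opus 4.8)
The plan is to convert the rank conditions defining a strong Lefschetz element into non-degeneracy statements for a family of symmetric bilinear forms on the graded pieces $A_k$, and then to recognise the Gram matrices of those forms as nonzero scalar multiples of the Hessian matrices $\Hess_f^k$ evaluated at $(a_0,\dots,a_n)$. First I would reduce the definition of SLP to its ``central'' form. Since $A=Q/\Ann(f)$ is Gorenstein it is a Poincaré algebra, so $\dim_\KK A_k=\dim_\KK A_{d-k}$ for all $k$, and a standard factoring argument shows that $L\in A_1$ is a strong Lefschetz element if and only if $\cdot L^{d-2k}\colon A_k\to A_{d-k}$ is bijective for every $k\in\{0,\dots,\lfloor d/2\rfloor\}$: for indices $a\le b$ with $a+b\le d$ the isomorphism $\cdot L^{d-2a}\colon A_a\to A_{d-a}$ factors through $\cdot L^{b-a}\colon A_a\to A_b$, which is therefore injective and so of maximal rank, while the general multiplication map $\cdot L^{k}\colon A_i\to A_{i+k}$ has the same rank as its transpose $\cdot L^{k}\colon A_{d-i-k}\to A_{d-i}$ with respect to the Gorenstein pairing, which is of the previous type.

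Next I would fix $k\le\lfloor d/2\rfloor$, set $m=d-2k\ge0$, and study the symmetric $\KK$-bilinear form $\Phi\colon A_k\times A_k\to A_d\cong\KK$ defined by $\Phi(\alpha,\beta)=(\alpha\beta L^{m})(f)$; it is well defined and symmetric because $Q$ is commutative. Rewriting $\Phi(\alpha,\beta)=\big(\alpha\cdot(\beta L^{m})\big)(f)$ and using that $A$ is a Poincaré algebra, so that the pairing $A_k\times A_{d-k}\to A_d$, $(\mu,\nu)\mapsto(\mu\nu)(f)$, is perfect, one sees that the Gram operator $A_k\to A_k^{\vee}$ of $\Phi$ is precisely $\cdot L^{m}\colon A_k\to A_{d-k}$ followed by the isomorphism $A_{d-k}\cong A_k^{\vee}$ induced by that perfect pairing. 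Hence, in any basis of $A_k$, the Gram matrix of $\Phi$ is invertible exactly when $\cdot L^{m}$ is bijective.

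Then I would identify that Gram matrix with a Hessian matrix evaluated at $(a_0,\dots,a_n)$. The elementary ingredient is that for any homogeneous $g\in R_e$ one has $L^{e}(g)=e!\,g(a_0,\dots,a_n)$, which follows by expanding $L^{e}=(a_0X_0+\dots+a_nX_n)^{e}$ by the multinomial theorem and observing that, for $|\gamma|=e$, the operator $X^{\gamma}$ sends $g$ to $\gamma!$ times the coefficient of $x^{\gamma}$ in $g$. Applying this with $g=\alpha_i\alpha_j(f)\in R_{d-2k}$, for an ordered basis $\mathcal B_k=\{\alpha_1,\dots,\alpha_{\sigma_k}\}$ of $A_k$, and using commutativity of $Q$, I get
\[
  (\alpha_i\alpha_j L^{m})(f)=L^{m}\big(\alpha_i\alpha_j(f)\big)=(d-2k)!\,\big(\alpha_i\alpha_j(f)\big)(a_0,\dots,a_n).
\]
Therefore the Gram matrix of $\Phi$ in $\mathcal B_k$ equals $(d-2k)!$ times $\Hess_f^k$ with entries evaluated at $(a_0,\dots,a_n)$, so its determinant is $\big((d-2k)!\big)^{\sigma_k}\,\hess_f^k(a_0,\dots,a_n)$.

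Chaining the three steps gives the asserted equivalence: $L$ is a strong Lefschetz element if and only if $\cdot L^{d-2k}$ is bijective for every $k\le\lfloor d/2\rfloor$, and, by the previous step, the latter holds if and only if $\hess_f^k(a_0,\dots,a_n)\neq0$ for every such $k$. For the final ``in particular'', note that replacing $\mathcal B_k$ by another ordered basis multiplies $\Hess_f^k$ on each side by a constant invertible matrix, hence multiplies $\hess_f^k$ by a nonzero scalar, so the identical vanishing of $\hess_f^k$ as a polynomial is independent of $\mathcal B_k$; if it holds for some $k\le\lfloor d/2\rfloor$, then no $L$ can satisfy the criterion, and $A$ does not have SLP. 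I expect the only delicate point to be the identification, in the second step, of the Gram operator of $\Phi$ with $\cdot L^{d-2k}$ post-composed with the Gorenstein duality isomorphism: this is what turns ``maximal rank of a multiplication map'' into the literal statement ``the Hessian does not vanish at $(a_0,\dots,a_n)$'', and everything else (the multinomial identity, the reduction of the first step) is routine bookkeeping.
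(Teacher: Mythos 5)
Your proposal is correct. The paper does not actually prove this statement---it is quoted verbatim from Watanabe (and Maeno--Watanabe), so there is no internal proof to compare against---but your argument is precisely the standard one from that source: reduce SLP to bijectivity of the central maps $\cdot L^{d-2k}\colon A_k\to A_{d-k}$ via the Gorenstein pairing, identify the Gram matrix of $(\alpha,\beta)\mapsto(\alpha\beta L^{d-2k})(f)$ with $(d-2k)!\,\Hess_f^k(a_0,\dotsc,a_n)$ using $L^e(g)=e!\,g(a_0,\dotsc,a_n)$ in characteristic $0$, and conclude; all the steps, including the injectivity-implies-maximal-rank point and the basis-independence of the vanishing of $\hess_f^k$, are handled correctly.
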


\section{Higher Order Nagata Idealization}\label{sec:2}
\markboth{Higher Order Nagata Idealization}{Higher Order Nagata
  Idealization}

\subsection{Nagata Idealizations}

\begin{definition}
  Let $A$ be a ring and let $M$ be an $A$-module. The \emph{Nagata
    idealization $A\ltimes M$} of $M$ is the ring with underlying set 
  $A\times M$ and operations defined as follow:
  \begin{equation*}
    (r,m)+(s,n)=(r+s,m+n),\,(r,m)\cdot(s,n)=(rs,sm+r n).
  \end{equation*}
\end{definition}

\subsubsection{Bigraded Artinian Gorenstein Algebras}
Let $\displaystyle A=\bigoplus_{i=0}^d A_i$ be a SGAG $\KK$-algebra,
it is \emph{bigraded} if: 
\begin{equation*}
  A_d=A_{(d_1,d_2)}\cong\KK,\,A_i=\bigoplus_{h=0}^i A_{(i,h-i)}\,\text{\,for\,}\,i\in\{0,\dotsc,d-1\},
\end{equation*}
since $A$ is a Gorenstein ring, and the pair $(d_1,d_2)$ is said the
\emph{socle bidegree of $A$}. In this case we call  $A$  an \emph{SBAG
  algebra}.
\begin{remark}\label{rmk:41}
  By Definition~\ref{th2.1}, $A_i\cong A_{d-i}^{\vee}=\Hom_{\KK}( A_{d-i},\KK)$ and since the
  duality commutes with direct sums, one has
  $A_{(i,j)}\cong A_{(d_1-i,d_2-j)}^{\vee}$ for any pair $(i,j)$.
  \begin{flushright}
    $\Diamond$
  \end{flushright}
\end{remark}
We fix notation as in Theorem~\ref{th3.1}:
  \begin{itemize}
  \item
    $S:=R\otimes_\KK\KK[u_1,\dotsc,u_m]=
    \KK[x_0,\dotsc,x_n,u_1,\dotsc,u_m]$ is the bigraded ring of
    polynomials in $m+n+1$ variables $x_0,\dotsc,x_n,u_1,\dotsc,u_m$;
 \item We have chosen the natural bigrading of $S$: $x_i$ has bidegree $(1,0)$ and $u_j$ has bidegree $(0,1)$;   
  \item Define $S_{(d_1,d_2)}$ to be the $\KK$-vector space of bihomogeneous
    polynomials $f$ of bidegree $(d_1,d_2)$; that is, $f$ can be
    written as $\displaystyle\sum_{i=0}^pa_i b_i$, where
    $a_i\in R_{d_1}=\KK[x_0,\dotsc,x_n]_{d_1}$ and
    $b_i\in\KK[u_1,\dotsc,u_m]_{d_2}$.
  \item
    $T:=Q\otimes_\KK\KK[U_1,\dotsc,U_m]=\KK[X_0,\dotsc,X_n,U_1,\dotsc,U_m]$
    is the (bigraded) ring of differential operators of $S$, where
    $\displaystyle X_i=\pdv{x_i}$ and $\displaystyle U_j=\pdv{u_j}$; $X_i$ has bidegree $(1,0)$ and $U_j$ has bidegree $(0,1)$. 
  \end{itemize}

A homogeneous ideal $I$ of $S$ is a \emph{bihomogeneous ideal} if:
\begin{equation*}
  I=\bigoplus_{i,j=0}^{\infty}I_{(i,j)},\text{\,where\,}\,\forall i,j\in\mathbb{N}_{\geq0},\,I_{(i,j)}=I\cap S_{(i,j)}.
\end{equation*}
Let $f\in S_{(d_1,d_2)}$, then $I=\Ann(f)$ is a bihomogeneous ideal and 
using Theorem~\ref{th2.2}, $A=T/(\Ann(f))$ is a SBAG $\KK$-algebra of
socle bidegree $(d_1,d_2)$ (and codimension $m+n+1$).
\begin{remark}
  Using the above notations, one has:
  \begin{equation*}
    \forall i>d_1,j>d_2,\,I_{(i,j)}=T_{(i,j)}.
  \end{equation*}
  Indeed, for all $\alpha\in T_{(i,j)}$ with $i>d_1,j>d_2$,
  $\alpha(f)=0$; as a consequence:
  \begin{equation*}
    \forall k\in\{0,\dotsc,d_1+d_2\},\,A_k=\bigoplus_{\substack{0\leq i\leq d_1\\
        0\leq j\leq d_2\\
        i+j=k}}A_{(i,j)}.
  \end{equation*}
  Moreover, the evaluation map
  $\alpha\in T_{(i,j)}\mapsto \alpha(f)\in A_{(d_1-i,d_2-j)}$ provides
  the following short exact sequence:
  \begin{equation}\label{SES}
    \xymatrix{
      0\ar[r] & I_{(i,j)}\ar[r] & T_{(i,j)}\ar[r] & A_{(d_1-i,d_2-j)}\ar[r] & 0.
    }
  \end{equation}
  \begin{flushright}
    $\Diamond$
  \end{flushright}
\end{remark}

The following theorem, which links Nagata idealizations with bihomogeneous polynomials, holds.
\begin{theorem}[\cite{HMMNWW}, Theorem 2.77]\label{th3.1}
  Let $S^{\prime}:=\KK[u_1,\dotsc,u_m]$ and
  $S:=R\otimes_\KK S^{\prime}$ be rings of polynomials, let
  $T^{\prime}=\KK[U_1,\dotsc,U_m]$ and $T:=Q\otimes_\KK T^{\prime}$ be
  the associated ring of differential operators, where
  $\displaystyle X_i=\pdv{x_i}$ and $\displaystyle U_j=\pdv{u_j}$. Let
  $g_0,\dotsc,g_n$ be homogeneous elements of $S^{\prime}$ of degree
  $d$, let $I$ be the $T^{\prime}$-submodule of $S^{\prime}$ generated
  by $\{\partial(g_i)\in R\mid\partial\in T,i\in\{0,\dotsc,n\}\}$ and
  let $A^{\prime}:=T^{\prime}/\Ann(I)$.  Define
  $f=x_0g_0+\dotsm+x_n g_n\in R$, it is a bihomogeneous polynomial of
  bidegree $(1,d)$, and let $A:=T/\Ann(f)$. Considering $I$ as an
  $A^{\prime}$-module,  $A^{\prime}\ltimes I\cong A$.
\end{theorem}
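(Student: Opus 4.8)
The plan is to construct an explicit $\KK$-algebra isomorphism between $A'\ltimes I$ and $A=T/\Ann_T(f)$, exploiting that $f$ has degree $1$ in the variables $x_0,\dots,x_n$. First I would record two elementary facts. Since each $g_i$ lies in $S'=\KK[u_1,\dots,u_m]$, we have $X_j(g_i)=0$, so the $T'$-submodule $I\subseteq S'$ of the statement is simply $I=\sum_{i=0}^n T'g_i$; moreover each $\Ann_{T'}(g_i)$ is already an ideal of $T'$ (from $\beta(g_i)=0$ we get $(\alpha\beta)(g_i)=\alpha(\beta(g_i))=0$), so $\Ann_{T'}(I)=\bigcap_i\Ann_{T'}(g_i)=\{\alpha\in T'\mid\alpha(g_i)=0\ \forall i\}$ and $I$ is naturally an $A'$-module. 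Second, because $f=\sum_k x_kg_k$ has bidegree $(1,d)$, every differential monomial of $X$-degree $\geq 2$ annihilates $f$ --- indeed $X_kX_\ell(f)=X_k(g_\ell)=0$ --- while for $\beta,\beta_0,\dots,\beta_n\in T'$ one has $\beta(f)=\sum_k x_k\,\beta(g_k)$ and $\bigl(\sum_k X_k\beta_k\bigr)(f)=\sum_k\beta_k(g_k)\in S'$.

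Next I would define a $\KK$-algebra homomorphism $\Phi\colon T\to A'\ltimes I$ on the generators of the polynomial ring $T$ by $U_j\mapsto(\overline{U_j},0)$ and $X_i\mapsto(0,g_i)$, where $\overline{U_j}$ is the class of $U_j$ in $A'=T'/\Ann_{T'}(I)$ and $g_i$ is viewed in the degree-$d$ part of $I$; this extends uniquely since $A'\ltimes I$ is a commutative $\KK$-algebra, and it is automatically consistent with $X_iX_j(f)=0$ because $(0,g_i)(0,g_j)=(0,0)$ in the idealization. I would then check that $\Phi$ is surjective: its image contains $(A',0)$, since the $\overline{U_j}$ generate $A'$ as a $\KK$-algebra, and it contains $(0,g_i)$, hence $(0,A'g_i)$ for every $i$, hence $(0,I)$ by the description of $I$ above.

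The heart of the argument is the identity $\ker\Phi=\Ann_T(f)$. Given $\theta\in T$, decompose it by its degree in $X_0,\dots,X_n$ as $\theta=\theta_0+\theta_1+\theta_{\geq 2}$ with $\theta_0\in T'$, $\theta_1=\sum_k X_k\beta_k$ for some $\beta_k\in T'$, and $\theta_{\geq 2}$ of $X$-degree at least $2$. Then $\Phi(\theta_{\geq 2})=0$, $\Phi(\theta_0)=(\overline{\theta_0},0)$, and $\Phi(\theta_1)=\bigl(0,\sum_k\beta_k(g_k)\bigr)$, so $\Phi(\theta)=0$ iff $\overline{\theta_0}=0$ in $A'$ and $\sum_k\beta_k(g_k)=0$. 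On the other hand $\theta(f)=\theta_0(f)+\theta_1(f)$ with $\theta_0(f)$ of $x$-degree $1$ and $\theta_1(f)$ of $x$-degree $0$, so $\theta(f)=0$ iff both vanish; and $\theta_0(f)=\sum_k x_k\theta_0(g_k)=0$ iff $\theta_0(g_k)=0$ for all $k$ iff $\theta_0\in\Ann_{T'}(I)$ iff $\overline{\theta_0}=0$, while $\theta_1(f)=\sum_k\beta_k(g_k)$. Comparing, $\ker\Phi=\Ann_T(f)$, so $\Phi$ descends to an isomorphism $A\cong A'\ltimes I$. (Bigrading $A'\ltimes I$ by putting $A'_j$ in bidegree $(0,j)$ and the degree-$p$ part of $I$ in bidegree $(1,d-p)$, the same formulas show $\Phi$ is bigraded, matching the bigrading of $A$.)

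I expect the main work to be exactly this kernel computation; the only point requiring care is the double role of $T'$ --- the variables $U_j$, which persist inside $A'$, versus the differential operators acting on the $g_i$ --- together with the fact that the $A'$-module structure on $I$ in the idealization is the differentiation action, so that $\Phi$ sends a mixed monomial $X_i\beta$ with $\beta\in T'$ to $(0,g_i)(\overline{\beta},0)=(0,\overline{\beta(g_i)})$, consistently with the identification of the $x$-degree-$1$ part of $A$ with $I$. Equivalently, one may avoid $\Phi$ and argue structurally: the bihomogeneous splitting $A=A_{(0,\ast)}\oplus A_{(1,\ast)}$ realizes $A_{(0,\ast)}$ as a subalgebra isomorphic to $A'$ and $A_{(1,\ast)}$ as a square-zero ideal isomorphic to $I$ as an $A'$-module --- the square-zero property being precisely the content of $X_kX_\ell(f)=0$ --- whence $A\cong A_{(0,\ast)}\ltimes A_{(1,\ast)}\cong A'\ltimes I$. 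In either route the essential input is the bidegree $(1,d)$ of $f$.
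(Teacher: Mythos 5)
Your argument is correct and complete. Note that the paper does not prove this statement at all: it is imported verbatim as Theorem 2.77 of \cite{HMMNWW}, so there is no in-paper proof to compare against. Your explicit construction is the natural one and checks out: the identification $\Ann_{T'}(I)=\{\alpha\in T'\mid\alpha(g_i)=0\ \forall i\}$ is justified because each $\Ann_{T'}(g_i)$ is an ideal under the apolarity action; the map $\Phi$ sending $U_j\mapsto(\overline{U_j},0)$ and $X_i\mapsto(0,g_i)$ extends from the free commutative algebra $T$; and the kernel computation correctly splits $\theta$ by $X$-degree, using that the $X$-degree $\geq 2$ part kills $f$ and maps into the square-zero ideal, while the degree-$0$ and degree-$1$ parts of $\theta(f)$ sit in different $x$-degrees and therefore vanish independently. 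The only point worth making explicit if you write this up is that the $X$-degree-one part of $\theta$ has a \emph{unique} expression $\sum_k X_k\beta_k$ with $\beta_k\in T'$, which is what makes the comparison of $\Phi(\theta_1)$ with $\theta_1(f)$ a genuine equivalence rather than a coincidence of one representative.
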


\subsection{Lefschetz Properties for Higher Nagata Idealizations}

\begin{definition}\label{def:nag}
  A bihomogeneous polynomial
  \begin{equation*}
    f=\sum_{i=0}^n x_i^{d_1}g_i\in S_{(d_1,d_2)}
  \end{equation*}
  is called a \emph{CW-Nagata polynomial of degree $d_1\geq1$} if
  $g_i\in \KK[u_1,\dotsc,u_m]$, $i=0,\dotsc,n$, are linearly
  independent monomials of degree $d_2\geq 2$.
\end{definition}
\begin{remark}
  One needs $\displaystyle n\leq\binom{m+d_2-1}{d_2}$ otherwise the
  $g_i$  cannot be linearly independent.

  From now on, we assume that $n$ satisfies this condition.
  \begin{flushright}
    $\Diamond$
  \end{flushright}
\end{remark}

We will need  the following propositions.
\begin{proposition}[\cite{GR} Proposition 2.5]
  Let  $n+1\geq m\geq 2,d_2>d_1\geq1$ and
  $\displaystyle s>\binom{m+d_1-1}{d_1}$; for any
  $j\in\{1,\dotsc,s\}$, let $f_j\in S_{(d_1,0)},\,g_j\in
  S_{(0,d_2)}$. Then the form $f=f_1g_1+\dotsm+f_s g_s$ of degree
  $d_1+d_2$ satisfies
  \begin{equation*}
    \hess_f^{d_1}=0;
  \end{equation*}
  that is, $A=T/\Ann(f)$ does not have the SLP condition.
\end{proposition}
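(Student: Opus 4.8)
The plan is to exhibit enough differential operators in the kernel of the $d_1$-th Hessian matrix $\Hess_f^{d_1}$ to force its determinant to vanish. The key observation is a counting argument: the form $f = f_1 g_1 + \dotsm + f_s g_s$ has bidegree $(d_1, d_2)$, and the part of the algebra $A = T/\Ann(f)$ in degree $d_1$ that can ``see'' $f$ nontrivially is severely constrained, because applying $d_1$ derivatives to $f$ leaves room only for the $x$-variables to act in a space of dimension $\binom{m+d_1-1}{d_1}$ — wait, more precisely, one should split according to bidegree. First I would decompose $A_{d_1} = \bigoplus_{i+j = d_1} A_{(i,j)}$ using the bigraded structure recalled before the statement, and analyze the Hessian matrix block by block with respect to this decomposition.

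The main step is to show that the block of $\Hess_f^{d_1}$ coming from operators of bidegree $(i,j)$ with $j \geq 1$ paired against operators of bidegree $(i',j')$ is forced to be degenerate. Concretely, an operator $\alpha$ of bidegree $(d_1, 0)$ (pure in the $X_i$) applied to $f$ lands in $S_{(0,d_2)}$, and the pairing $\alpha \cdot \beta (f)$ for $\alpha, \beta$ both of this type requires $\beta$ to raise degree in the $u$-variables, which is impossible since $\beta \in T_{(d_1,0)}$ as well; hence the $(d_1,0)$–$(d_1,0)$ block is identically zero. More generally, by bidegree reasons $\alpha \beta(f) \neq 0$ only when the bidegrees $(i,j)$ of $\alpha$ and $(i',j')$ of $\beta$ satisfy $i + i' = d_1$ and $j + j' = d_2$; but within degree $d_1$ we have $i+j = d_1$ and $i'+j' = d_1$, so $j + j' = 2d_1 - (i+i') = 2d_1 - d_1 = d_1 < d_2$ by hypothesis $d_2 > d_1$. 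Therefore \emph{every} entry $\alpha\beta(f)$ with $\alpha, \beta \in A_{d_1}$ vanishes, so $\Hess_f^{d_1}$ is the zero matrix and its determinant is $0$ — unless $A_{d_1} = 0$, in which case the determinant of the empty matrix is conventionally $1 \neq 0$, so one must check $A_{d_1} \neq 0$.

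Thus the hypothesis $s > \binom{m+d_1-1}{d_1}$ must be used precisely to guarantee $A_{d_1} \neq 0$: I would argue that the images $U^{(\alpha)}(f)$ for multi-indices $\alpha$ in the $U$-variables of order $d_2$ span a space of dimension at least $s$ inside $S_{(d_1,0)} = R_{d_1}$, which has dimension $\binom{m+d_1-1}{d_1}$... no — $R_{d_1} = \KK[x_0,\dotsc,x_n]_{d_1}$ has dimension $\binom{n+d_1}{d_1}$, whereas the relevant bound $\binom{m+d_1-1}{d_1}$ is the dimension of $\KK[u_1,\dotsc,u_m]_{d_1}$. So the correct reading is: operators pure in the $X$-variables of order $d_1$ span a quotient of $Q_{d_1}$ of dimension at most $\binom{m+d_1-1}{d_1}$ inside $A_{(d_1,0)}$, and the condition $s$ large forces a linear dependence among the $f_j$ as elements of that space, producing a nonzero element of $\Ann(f)_{(d_1,0)}$ — which is not quite what we want either. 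The cleanest route, which I would adopt, is simply: by the short exact sequence \eqref{SES}, $A_{(d_1,0)} \cong T_{(d_1,0)}/I_{(d_1,0)}$, and $A_{(d_1,0)} \neq 0$ because $f$ itself has a nonzero component of bidegree $(d_1, d_2)$, so some order-$d_2$ derivative in the $u$'s gives a nonzero element of $R_{d_1}$; dually $A_{(d_1,0)}$ is nonzero. Hence $A_{d_1} \supseteq A_{(d_1,0)} \neq 0$, the Hessian matrix is nonempty and identically zero by the bidegree count above, so $\hess_f^{d_1} = 0$, and by the Hessian criterion (Theorem of \cite{W:J} quoted above) $A$ fails the SLP.

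The main obstacle I anticipate is the bookkeeping at the two ``endpoints'' of the argument: first, making rigorous that the only constraint genuinely needed for non-emptiness of $\Hess_f^{d_1}$ is $s \geq 1$ together with $f \neq 0$, so that the role of the strict inequality $s > \binom{m+d_1-1}{d_1}$ must be re-examined — I suspect in the source it is used to ensure that $A_{(d_1,0)}$, rather than some other block, is the one that is nonzero, or to align this proposition with a companion statement about $\hess_f^{d_1-1}$; and second, being careful that the hypotheses $n+1 \geq m \geq 2$ are not secretly required for the vanishing (they are not — the bidegree obstruction $j + j' = d_1 < d_2$ is unconditional), so the proof should make clear these extra hypotheses are inherited from the ambient setup of \cite{GR} and are not essential here.
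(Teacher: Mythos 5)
The paper offers no proof of this proposition --- it is quoted from \cite{GR} --- so your attempt can only be measured against the argument there, and unfortunately its central step is wrong. You assert that an entry $\alpha\beta(f)$ of $\Hess_f^{d_1}$, with $\alpha\in A_{(i,j)}$ and $\beta\in A_{(i',j')}$, can be nonzero only if $i+i'=d_1$ and $j+j'=d_2$, and you conclude that the whole matrix vanishes because $j+j'=d_1<d_2$. But $\alpha\beta(f)$ lies in $S_{(d_1-i-i',\,d_2-j-j')}$, which vanishes for degree reasons only when $i+i'>d_1$ or $j+j'>d_2$; the condition you impose is the condition for the entry to be a \emph{scalar}, not for it to be nonzero. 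The entries of the $k$-th Hessian are forms of degree $d-2k$ --- this is precisely why the criterion of \cite{W:J} evaluates $\hess_f^k$ at a point $(a_0,\dotsc,a_n)$ --- and here they are forms of positive degree $d_2-d_1$. Concretely, take $m=2$, $n=2$, $d_1=1$, $d_2=2$, $s=3>\binom{2}{1}$ and $f=x_0u_1^2+x_1u_1u_2+x_2u_2^2$: the entry $X_0U_1(f)=2u_1$ of $\Hess_f^{1}$ is a nonzero linear form, even though $\hess_f^1$ does vanish. If your argument were valid it would show $\hess_f^k=0$ for every $f$ and every $k$ with $2k\neq\deg f$, i.e.\ that no algebra of odd socle degree has the SLP, which is absurd. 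The ensuing discussion of $A_{d_1}\neq0$ is then moot, and the fact that your proof never actually uses $s>\binom{m+d_1-1}{d_1}$ (as you yourself half-notice) should have been the warning sign.

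The actual mechanism is a rank count on a genuine zero \emph{block}, not on the whole matrix. Decompose $A_{d_1}=\bigoplus_{i+j=d_1}A_{(i,j)}$ and order the basis accordingly; the correct bidegree vanishing is $\alpha\beta(f)=0$ whenever $i+i'>d_1$. Hence the rows indexed by $A_{(d_1,0)}$ can be nonzero only in the columns indexed by $A_{(0,d_1)}$, and $\dim A_{(0,d_1)}\le\dim T_{(0,d_1)}=\binom{m+d_1-1}{d_1}$. The hypothesis $s>\binom{m+d_1-1}{d_1}$ is exactly what lets $\dim A_{(d_1,0)}$ (which by the duality of Remark \ref{rmk:41} equals $\dim A_{(0,d_2)}=\dim\langle f_1,\dotsc,f_s\rangle$ once the $g_j$ are independent) exceed that bound, so that strictly more than $\binom{m+d_1-1}{d_1}$ rows of $\Hess_f^{d_1}$ are supported on at most $\binom{m+d_1-1}{d_1}$ columns; those rows are linearly dependent over the field of rational functions and the determinant vanishes identically. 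Equivalently, for every linear form $L$ the map $\cdot L^{d_2-d_1}$ sends $A_{(d_1,0)}$ into $A_{(d_1,d_2-d_1)}\cong A_{(0,d_1)}^{\vee}$, a strictly smaller space, so it always has a kernel and is never bijective. In the example above this is the $3\times3$ zero block (rows and columns $X_0,X_1,X_2$) inside a $5\times5$ matrix, and $3+3>5$ forces $\det=0$.
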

\begin{proposition}[\cite{CGIM} Proposition 2.7]
  Let  $n+1\geq m\geq2,d_1\geq d_2$.  Then
  $\displaystyle L=\sum_{i=0}^n X_i$ is a \emph{Weak Lefschetz
    Element}; that is, $A=T/\Ann(f)$ has the WLP condition.
\end{proposition}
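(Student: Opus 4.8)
The plan is to work entirely with the bigrading. Since $L=X_0+\dots+X_n$ has bidegree $(1,0)$, multiplication by $L$ preserves the second grading index, so for each $k$ the map $\cdot L\colon A_k\to A_{k+1}$ is the direct sum over $j$ of the ``column maps'' $\cdot L\colon A_{(k-j,j)}\to A_{(k-j+1,j)}$; I would analyse these one column at a time. First I would record the shape of $A$. Since $f=\sum_{i=0}^n x_i^{d_1}g_i$ contains no mixed monomial $x_ix_j$, all the operators $X_iX_j$ with $i\neq j$ and all $X_i^{d_1+1}$ lie in $\Ann(f)$; hence for $1\leq i\leq d_1$ the space $A_{(i,j)}$ is spanned by the classes of $X_k^i\partial$ applied to $f$, that is---up to nonzero scalars, using $\operatorname{char}\KK=0$---by the polynomials $x_k^{d_1-i}\,\partial(g_k)$ with $k\in\{0,\dots,n\}$ and $\partial\in\KK[U_1,\dots,U_m]_j$. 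Counting monomial divisors of the $g_k$ then shows that $\dim A_{(1,j)}=\cdots=\dim A_{(d_1-1,j)}$, and that this common value is at least $\max\{\dim A_{(0,j)},\dim A_{(d_1,j)}\}$, the inequalities being instances of $|\bigcup_kS_k|\leq\sum_k|S_k|$ for the sets $S_k$ of degree-$j$ monomial divisors of $g_k$.

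Next I would compute the column maps. A direct check---the cross terms $X_lX_k^i$ die in $A$---shows that $\cdot L$ carries the class of $X_k^i\partial(f)$ to that of $X_k^{i+1}\partial(f)$. Together with the description above, this gives: $\cdot L\colon A_{(0,j)}\to A_{(1,j)}$ is injective (here one uses $d_1\geq d_2\geq2$, so that the monomials $x_k^{d_1-1}$ remain pairwise distinct); $\cdot L\colon A_{(i,j)}\to A_{(i+1,j)}$ is an isomorphism for $1\leq i\leq d_1-2$; and $\cdot L\colon A_{(d_1-1,j)}\to A_{(d_1,j)}$ is surjective, the $x_k$-factors collapsing to $1$. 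The key additional point, which is what makes the argument close, is that the two extreme columns are in fact isomorphisms: $\dim A_{(d_1-1,0)}=\dim A_{(d_1,0)}=n+1$ and $\dim A_{(0,d_2)}=\dim A_{(1,d_2)}=n+1$, in both cases because $g_0,\dots,g_n$ are linearly independent---hence pairwise distinct---monomials of degree $d_2$.

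The conclusion is then a dimension count. For $k\leq d_1-1$, every column contributing to $\cdot L\colon A_k\to A_{k+1}$ has $i=k-j\leq d_1-1$, and the only instance with $i=d_1-1$ occurs at $k=d_1-1$, $j=0$, where it is an isomorphism; hence all column maps are injective, so $\cdot L\colon A_k\to A_{k+1}$ is injective and in particular of maximal rank. For $k\geq d_1$, the hypothesis $d_1\geq d_2$ forces a column with $i=0$ to occur only when $k=d_1=d_2$ and $j=d_2$, where it is an isomorphism, while every other relevant column is surjective or has trivial target; hence $\cdot L\colon A_k\to A_{k+1}$ is surjective, again of maximal rank. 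Therefore $L=\sum_iX_i$ is a weak Lefschetz element and $A$ has the WLP. I expect the only genuinely delicate point to be this last piece of bookkeeping: a direct sum of maximal-rank maps need not be of maximal rank, so one must rule out the coexistence, within one total degree, of a strictly injective column and a strictly surjective (or ``lossy'') one---and this is exactly where the hypothesis $d_1\geq d_2$ and the linear independence of the $g_i$ (through the two boundary isomorphisms) are used.
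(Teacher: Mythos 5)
The paper does not actually prove this proposition: it is quoted verbatim from \cite{CGIM} (Proposition 2.7) and used as a black box, so there is no in-paper argument to compare yours against. Judged on its own, your proof is correct and complete. The column-by-column analysis of $\cdot L\colon A_{(i,j)}\to A_{(i+1,j)}$ is the natural route given the bigraded structure, and it is consistent with the explicit bases for $A_{(i,j)}$ that the paper later establishes in Theorem \ref{thm:1} (monomials $X_k^iU^\beta$ with $U^\beta(g_k)\neq 0$ for $0<i<d_1$, the quotient description for $i=0$, and the duality $A_{(d_1,j)}\cong A_{(0,d_2-j)}^{\vee}$). You correctly identify and resolve the one genuinely delicate point, namely that a direct sum of maximal-rank maps need not have maximal rank: your bookkeeping shows that for $k\leq d_1-1$ every column occurring in $A_k\to A_{k+1}$ is injective (the only column with $i=d_1-1$ in that range is $j=0$, which is an isomorphism of $(n+1)$-dimensional spaces), while for $k\geq d_1$ every column is surjective (the only possible $i=0$ column forces $k=j=d_1=d_2$, where injectivity plus $\dim A_{(0,d_2)}=\dim A_{(1,d_2)}=n+1$, both using the linear independence of the distinct monomials $g_0,\dotsc,g_n$ and $d_1\geq 2$, upgrades it to an isomorphism). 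Two cosmetic remarks: the inequality $\dim A_{(i,j)}\geq\dim A_{(d_1,j)}$ in your first paragraph implicitly uses the bijection between degree-$j$ and degree-$(d_2-j)$ divisors of each fixed $g_k$, which is worth one line; and the hypothesis $n+1\geq m$ is never used in your argument (it is inherited from the setting of \cite{CGIM}), so your proof in fact establishes the statement without it.
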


\section{CW-complex Nagata Idealization of Bidegree $(d_1,d_2)$}\label{sec:3}
\markboth{CW-complex Nagata Idealization of Bidegree
  $(d_1,d_2)$}{CW-complex Nagata Idealization of bidegree $(d_1,d_2)$}

Let  $S$ and $T$  be as in the previous subsection.

\begin{definition}
  A bihomogeneous CW-Nagata polynomial
  \begin{equation*}
    f=\sum_{i=0}^n x_i^{d_1}g_i\in S_{(d_1,d_2)}
  \end{equation*}
  is called a \emph{simplicial Nagata polynomial of degree $d_1$} if
  the monomials $g_i$ are square free.
\end{definition}
\begin{remark}
  One needs $\displaystyle n\leq\binom{m}{d_2}$ otherwise the $g_i$
  cannot be square free.
  \begin{flushright}
    $\Diamond$
  \end{flushright}
\end{remark}

\subsection{CW-complexes and bihomogeneous polynomials}

\subsubsection{Abstract finite simplicial complexes}
\begin{definition}
  Let $V=\{u_1,\dotsc,u_m\}$ be a finite set. An  \emph{abstract
    simplicial complex $\Delta$ with vertex set $V$} is
  a subset of $2^V$ such that
  \begin{enumerate}
  \item $\forall u\in V\Rightarrow\{u\}\in\Delta$,
  \item
    $\forall
    \sigma\in\Delta,\tau\subsetneqq\sigma,\tau\neq\emptyset\Rightarrow\tau\in\Delta$.
  \end{enumerate}
\end{definition}
The elements $\sigma$ of $\Delta$ are called \emph{faces} or
\emph{simplices}; a face with $q+1$ vertices is called \emph{$q$-face}
or \emph{face of dimension $q$} and one writes $\dim\sigma=q$; the
maximal faces (with respect to the inclusion) are called
\emph{facets}; if all facets have the same dimension $d\geq1$ then one
says that $\Delta$ is \emph{of pure dimension $d$}.  The set
$\Delta^k$ of faces of dimension at most $k$ is called
\emph{$k$-skeleton of $\Delta$}.  $2^V$ is called \emph{simplex} (of
dimension $m-1$).
\begin{remark}\label{rem1}
  \
  \begin{enumerate}
  \item (cfr.~\cite[Remark 3.4]{CGIM}) There is a natural bijection,
     introduced  in \cite{F:S}, between the
    square free monomials, of degree $d$, in the variables
    $u_1,\dotsc,u_m$ and the $(d-1)$-faces of the simplex $2^V$, with
    vertex set $V=\{u_1,\dotsc,u_m\}$.  In fact, a square free
    monomial $g=u_{i_1}\dotsm u_{i_d}$ corresponds to the subset
    $\{u_{i_1},\dotsc,u_{i_d}\}$ of $2^V$.  Vice versa, to any subset
    $F$ of $V$ with $d$ elements one associates the free square
    monomial $\displaystyle m_F=\prod_{u_i\in F}u_i$ of degree $d$.
  \item Let
    $\displaystyle f=\sum_{i=0}^n x_i^{d_1}g_i\in S_{(d_1,d_2)}$ be a
    simplicial Nagata polynomial; by hypothesis there is bijection
    between the monomials $g_i$  and the indeterminates $x_i$.
    From  this, we can associate to $f$ a simplicial complex
    $\Delta_f$ with vertices $u_1,\dotsc,u_m$ where the facet which
    corresponds to $g_i$ is identified with $x_i^{d_1}$.
  \end{enumerate}
  \begin{flushright}
    $\Diamond$
  \end{flushright}
\end{remark}

\subsubsection{CW-complexes}

For the topological background, we refer to~\cite{H:AE}. We start by fixing some notations. 
\begin{definition}\label{cell}
  Let $k\in\mathbb{N}_{\geq1}$. A topological space $e^k$ homeomorphic
  to the open (unitary) ball 
$\{(x_1,\dotsc,x_k)\in\mathbb{R}^k\mid x_1^2+\dotsm+x_k^2<1\} $
of dimension $k$ (with the  natural topology induced by
$\mathbb{R}^{k+1}$) is called a \emph{$k$-cell}. Its boundary, i.e. the
\emph{$(k-1)$-dimensional sphere} will be denoted by
$\mathbb{S}^{k-1}=\{(x_1,\dotsc,x_k)\in\mathbb{R}^k\mid x_1^2+\dotsm+x_k^2=1\}$ and
its closure, \ i.e. the closed (unitary) $k$-dimensional disk will be denoted by
$\mathbb{D}^k:=\{(x_1,\dotsc,x_k)\in\mathbb{R}^k\mid x_1^2+\dotsm+x_k^2\leq1\}$.  
\end{definition}

We recall the following 
\begin{definition}
  A \emph{CW-complex} is a topological space $X$ constructed in the
  following way:
  \begin{enumerate}
  \item There exists a fixed and discrete set of points
    $X^0\subset X$, whose elements are called \emph{$0$-cells};
  \item Inductively, the \emph{$k$-skeleton $X^k$ of $X$} is
    constructed from $X^{k-1}$ by attaching $k$-cells $e_{\alpha}^k$
    (with index set $A_k$) via continuous maps
    $\varphi^k_{\alpha}\colon\mathbb{S}^{k-1}_{\alpha}\to X^{k-1}$
    (the \emph{attaching maps}).  This means that $X^k$ is a quotient
    of
    $\displaystyle Y^k=X^{k-1}\bigcup_{\alpha\in
      A_k}\mathbb{D}^k_{\alpha}$ under the identification
    $x\sim\varphi_{\alpha}(x)$ for
    $x\in\partial\mathbb{D}^k_{\alpha}$; the \emph{elements of the
      $k$-skeleton} are the (closure of the) attached $k$ cells;
  \item $\displaystyle X=\bigcup_{k\in\mathbb{N}_{\geq0}}X^k$ and a
    subset $C$ of $X$ is closed if and only if $C\cap X^k$ is closed
    for any $k$ (\emph{closed weak topology}).
  \end{enumerate}
\end{definition}

\begin{definition}
  A subset $Z$ of a CW-complex $X$ is a \emph{CW-subcomplex} if it is
  the union of cells of $X$, such that the closure of each cell is in
  $Z$.
\end{definition}

\begin{definition}
  A CW-complex is \emph{finite} if it consists of a finite number
  of cells.
\end{definition}

We will be interested mainly in finite CW-complexes.

\begin{example}[Geometric realization of an abstract simplicial
  complex]\label{ex:1} 
  It is an obvious fact that to any simplicial complex $\Delta$ one can associate a
  finite CW-complex $\widetilde{\Delta}$  via the \emph{geometric
    realization} of $\Delta$ as a
  simplicial complex (as a topological space) $\widetilde{\Delta}$. 

  \begin{flushright}
    $\triangle$
  \end{flushright}
\end{example}
 
  In what follows we will always identify abstract simplicial
  complexes with their corresponding simplicial complexes.
\begin{construction}\label{con:1}
  In  Remark~\ref{rem1}, we saw that to any
  degree $d$ square-free monomial
  $u_{i_1}\dotsm u_{i_d}\in\KK[u_1,\dotsc,u_m]_d$ one can associate
  the $(d-1)$-face $\{u_{i_1},\dotsc,u_{i_d}\}$ of the abstract
  $(m-1)$-dimensional simplex $\Delta(m):=2^{\{u_1,\dotsc,u_m\}}$, and
  vice versa: if we call
  \begin{align*}
    \rho_d&:=\{f\in \KK[u_1,\dotsc,u_m]_d\mid \textup{$f\neq 0$ is a square-free monic monomial}\}\\
    D(m)_d&:=\Delta(m)^d\setminus\Delta(m)^{d-1},
  \end{align*}
  we have a bijection
  \begin{align*}
    \sigma_d\colon\rho_d & \to D(m)_d\\
    u_{i_1}\dotsm u_{i_d}&\mapsto\left\{u_{i_1},\dotsc,u_{i_d}\right\}.
  \end{align*}
  Alternatively, we can associate to $u_{i_1}\dotsm u_{i_d}$ the
  element of the $(d-1)$-skeleton
  $\overline{\left\{u_{i_1},\dotsc,u_{i_d}\right\}}\in\widetilde{\Delta(m)}^{d-1}$,
  so  we have a bijection
  \begin{align*}
    \sigma_d\colon\rho_d &\to\widetilde{\Delta(m)}^{d-1}\\
    u_{i_1}\dotsm u_{i_d}&\mapsto\overline{\left\{u_{i_1},\dotsc,u_{i_d}\right\}}
  \end{align*}
  between the square-free monomials and the $(d-1)$-faces of the
  (topological) simplex $\widetilde{\Delta(m)}$.

  Using CW-complexes, we will  extend this construction to the
  \emph{non-square-free monic monomials}.  We  proceed as
  follows. Let $g:=u_1^{j_1}\dotsm u_m^{j_m}$ be a generic degree
  $d:=j_1+\dotsb+j_m$ monomial.  Consider the following finite set:
  $W:=\left\{u^1_1,\dotsc,u^{j_1}_1,\dotsc,u^1_m,\dotsc,u^{j_m}_m\right\}$,
  and if $\Delta(d):=2^W$ is the abstract associated (finite) simplex,
  we  consider the corresponding
  (topological) simplex (which is a CW-complex)
  $\widetilde{\Delta(d)}$.

  If  $j_k\leq1$ we do nothing, while if $j_k\ge 2$, we
  recursively identify, for $\ell$ varying from $0$ to $j_k-2$, the
  $\ell$-faces of the subsimplex
  $\widetilde{2^{\left\{u^1_k,\dotsc,u^{j_k}_k\right\}}}\subset
  \widetilde{\Delta(d)}$: start with $\ell=0$, and we identify all the
  $j_k$ points to one point---call it $u_k$. Then, for $\ell=1$, we
  obtain a bouquet of $\binom{j_k+1}{2}$ circles, and we identify them
  in just one circle $\mathbb S^1$ passing through $u_k$, and so on,
  up to the facets of
  $\widetilde{2^{\left\{u^1_k,\dotsc,u^{j_k}_k\right\}}}$, i.e. its
  $j_k+1$ $(j_k-1)$-faces, which, by the construction, have all their
  boundary in common, and we identify all of them.

  Make all these identifications for all $j_1,\dotsc ,j_m$; in this
  way, we obtain a finite CW-complex $X=X_g$ of dimension $d-1$, with
  $0$-skeleton $X^0=\{u_i\mid j_i\neq0\}\subset \{u_1,\dotsc,u_m\}$,
  obtained from the $(d-1)$-dimensional simplex
  $\widetilde{\Delta(d)}$, with the above identification.

  In this way, we obtain a finite CW-complex
  $X=X_g$ 
  of dimension $d-1$, with $0$-skeleton
  $X^0=\{u_i\mid j_i\neq0\}\subset \{u_1,\dotsc,u_m\}$, obtained from
  the $(d-1)$-dimensional simplex $\widetilde{\Delta(d)}$, with the
  above identification. Under this identification  each closure of a $(j_k-1)$-cell
  $\overline{\left\{u_k^1,\dotsc,u_k^{j_k}\right\}}$ becomes  a
    point if $j_k=1$, a circle $\mathbb{S}^{1}$ if $j_k=2$, a
  topological space with fundamental group $\mathbb Z_3$ if $j_k=2$
  (i.e. it is not a topological surface), etc. We will denote these
  spaces in what follows by $\epsilon^{j_k-1}_k$,
  i.e. $\epsilon^{j_k-1}_k$ corresponds to $u_k^{j_k}$, and vice
  versa:
    \begin{proposition}
    Every power in $u_1^{j_1}\dotsm u_m^{j_m}$ (up to a permutation of
    the variables) corresponds to a $\epsilon^{j_k-1}_k$, and vice
    versa.
  \end{proposition}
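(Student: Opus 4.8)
The plan is to deduce the statement directly from the way $X_g$ is built in Construction~\ref{con:1}: the assignment $u_k^{j_k}\mapsto\epsilon^{j_k-1}_k$ is essentially tautological, and what really has to be checked is that it is well defined and bijective, which in turn reduces to the fact that the recursive identifications split along the distinct variables. First I fix notation: write $g=u_1^{j_1}\dotsm u_m^{j_m}$ of degree $d$, let $K=\{k\mid j_k\neq0\}$, and for $k\in K$ set $W_k=\{u_k^1,\dotsc,u_k^{j_k}\}$, so that $W=\bigsqcup_{k\in K}W_k$ is a partition and, inside the simplex $\widetilde{\Delta(d)}$, the subsimplex $\widetilde{2^{W_k}}$ is a topological $(j_k-1)$-simplex. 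The powers occurring in $g$, up to permutation of the variables, are precisely the factors $u_k^{j_k}$ with $k\in K$.

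For well-definedness, fix $k\in K$ and look at the identifications of Construction~\ref{con:1}. The block-$k$ identifications glue together, for each $\ell\in\{0,\dotsc,j_k-2\}$, the $\ell$-faces of $\widetilde{2^{W_k}}$, compatibly with the gluings already performed on their boundaries; and a face of $\widetilde{\Delta(d)}$ spanned by a vertex set $\sigma\subseteq W_k$ is affected only by block-$k$ identifications, since for $k'\neq k$ every face touched by a block-$k'$ identification is spanned by a subset of $W_{k'}$ and $W_{k'}\cap W_k=\emptyset$. Hence the relations $\sim_k$, $k\in K$, have pairwise disjoint supports, their join carries out each of them independently, and the image of $\widetilde{2^{W_k}}$ in $X_g$ coincides with its image under $\sim_k$ alone. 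This image is therefore a well-defined subspace $\epsilon^{j_k-1}_k\subseteq X_g$ depending only on the pair $(k,j_k)$, i.e.\ only on the factor $u_k^{j_k}$ of $g$. Moreover the block-$k$ identifications only move faces of dimension $\le j_k-2$, so the unique open $(j_k-1)$-cell of $\widetilde{2^{W_k}}$ survives and $\dim\epsilon^{j_k-1}_k=j_k-1$, while its $0$-skeleton is the single point $u_k$ onto which all the $u_k^a$ are identified (for $j_k=1$ this reads $\epsilon^0_k=\{u_k\}$).

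The inverse map is then forced: from a space $\epsilon^{j_k-1}_k$ produced by the construction one reads off $j_k=\dim\epsilon^{j_k-1}_k+1$ and recovers $k$ from the label of its unique $0$-cell $u_k\in X^0=\{u_i\mid j_i\neq0\}$. Injectivity of $u_k^{j_k}\mapsto\epsilon^{j_k-1}_k$ holds because distinct blocks $W_k$, $W_{k'}$ are disjoint faces of $\widetilde{\Delta(d)}$ and their images acquire distinct $0$-skeletons $\{u_k\}\neq\{u_{k'}\}$; surjectivity onto the set of the $\epsilon$'s is immediate, each such space being by construction the image of some $\widetilde{2^{W_k}}$. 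This gives the claimed bijection. The only point that genuinely needs care is the assertion that the block relations can be realized simultaneously and keep $X_g$ a CW-complex (the ``bouquet of circles'' discussion in Construction~\ref{con:1}); once this is granted, the proposition is pure bookkeeping along the partition $W=\bigsqcup_{k\in K}W_k$.
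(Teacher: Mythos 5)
Your proof is correct and follows the same route the paper intends: the paper states this proposition without proof, as an immediate consequence of Construction~\ref{con:1}, where $\epsilon^{j_k-1}_k$ is by definition the image of $\overline{\{u_k^1,\dotsc,u_k^{j_k}\}}$ under the identifications. Your writeup simply makes explicit the (correct) bookkeeping that the block relations have disjoint supports on the partition $W=\bigsqcup_k W_k$ and that $j_k$ and $k$ are recovered from the dimension and the $0$-skeleton of $\epsilon^{j_k-1}_k$.
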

    We can see $X_g$ as a $(d-1)$-dimensional 
  \emph{join} between these spaces $\epsilon^{j_k-1}_k$ and the span
  of the $0$-skeleton $X^0$ i.e. the simplex
  $S_X\subset\widetilde{\Delta(m)}$ associated to it;
  $S_X\cong \widetilde{\Delta(\ell)}$, where $\ell=\# X^0\le m$.
  \begin{remark}
    This last observation suggests we  consider an alternative
    construction:  recall that the cellular decomposition of the
    real projective space is obtained attaching a single cell at each
    passage; indeed, $\PP_{\mathbb R}^n$ is obtained from
    $\PP_{\mathbb R}^{n-1}$ by attaching one $n$-cell with the
    quotient projection
    $\varphi^{n-1}\colon \mathbb S^{n-1}\to \PP_{\mathbb R}^{n-1}$ as
    the attaching map.

    Then, to each power
    $u_k^{j_k}$ we associate a real projective space of dimension
    $j_k-1$
    $\PP_k^{j_k-1}$ and immersions
    $i_{k-1}\colon\PP_k^{j_k-1}\hookrightarrow
    \PP_k^{j_k}$; so $\PP_k^0=u_k\in \PP_k^{j_k-1}$.

    Finally, to $g=u_1^{j_1}\dotsm u_m^{j_m}$ we associate the join
    between the $\PP_k^{j_k-1}$ and the $S_X$ defined above; if we
    call this join by $X_g$, we can proceed in an equivalent way, by
    changing $\epsilon^{j_k-1}_k$ with $\PP_k^{j_k-1}$.
    \begin{flushright}
      $\Diamond$
    \end{flushright}
  \end{remark}
  It is  clear how to glue two of these finite CW-complexes---say
  $X=X_{u_1^{j_1} \dotsm u_m^{j_m}}$ and
  $Y=Y_{u_1^{k_1} \dotsm u_m^{k_m}}$, of degree $d=j_1+\dotsb+j_m$ and
  $d^{\prime}=k_1+\dotsb+k_m$---along $\widetilde{\Delta(m)}$: we
  simply attach $X$ and $Y$ via the inclusion maps
  $S_X\subset \widetilde{\Delta(m)}$ and
  $S_Y\subset \widetilde{\Delta(m)}$, where $S_X$ and $S_Y$ are the
  simplexes associated to, respectively, $X$ and $Y$.

  Finally, taking all these finite CW-complexes together, we obtain a
  CW-complex $P$ in the following way:
  \begin{align*}
    C&:=\bigsqcup_{ u_1^{j_1} \dotsm u_m^{j_m}\in \KK[u_1,\dotsc,u_m]}X_{u_1^{j_1} \dotsm u_m^{j_m}} & P(m):={C} /{\sim}
  \end{align*}
  where $\sim$ is the equivalence relation induced by the above
  gluing.
  
  \begin{proposition}
    There is bijection between the monomials of degree $d$ in
    $\KK[u_1,\dotsc,u_m]$ and the elements of the $(d-1)$-skeleton of
    $P(m)$.
  \end{proposition}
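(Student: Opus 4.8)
The plan is to upgrade the correspondence of Construction~\ref{con:1} to an explicit map $\Phi_d$ and then to check it is well defined, injective and surjective. To a monic monomial $g=u_1^{j_1}\dotsm u_m^{j_m}$ of degree $d=j_1+\dotsb+j_m$ I assign the $(d-1)$-dimensional subcomplex $X_g\subset P(m)$ produced in Construction~\ref{con:1} (equivalently, the closure of its top piece), calling its image $\xi_g$. When $g$ is square-free, $X_g$ is the closed face $\overline{\{u_{i_1},\dots,u_{i_d}\}}$, so $\Phi_d$ extends the bijection $\sigma_d$ already recalled between square-free degree-$d$ monomials and the $(d-1)$-faces of $\widetilde{\Delta(m)}$. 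Since $\dim X_g=d-1$, the element $\xi_g$ lies in the $(d-1)$-skeleton of $P(m)$, so $\Phi_d$ does go from degree-$d$ monomials to elements of that skeleton.

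Well-definedness and injectivity would be handled together. First, the only identifications made in passing from $C=\bigsqcup_g X_g$ to $P(m)=C/\!\sim$ occur inside the common simplicial core $\widetilde{\Delta(m)}$, through the inclusions $S_{X_g}\subset\widetilde{\Delta(m)}$ of the simplex spanned by the $0$-skeleton $X_g^0$; hence each $X_g$ embeds into $P(m)$ as a subcomplex and $\xi_g$ is an honest element of the $(d-1)$-skeleton. Moreover distinct $g,g'$ of degree $d$ are never fused: if $\mathrm{supp}(g)=\mathrm{supp}(g')$ the complexes $X_g,X_{g'}$ are glued only along the common simplex $S_{X_g}=S_{X_{g'}}$, which is a proper subcomplex as soon as $g$ is not square-free, so their top pieces remain distinct, and if the supports differ the gluing is even smaller. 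Finally $g$ is recoverable from $\xi_g$: the $0$-skeleton $X_g^0=\{u_i\mid j_i\neq 0\}$ gives $\mathrm{supp}(g)$, and, using the description of $X_g$ as the join of the pieces $\epsilon^{j_k-1}_k$ ($j_k\ge 2$) with $S_X$ together with the correspondence (Construction~\ref{con:1}) between the powers $u_k^{j_k}$ and the spaces $\epsilon^{j_k-1}_k$, each exponent is read off as $j_k=1+\dim\epsilon^{j_k-1}_k$. Hence $\Phi_d$ is injective.

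For surjectivity, let $\xi$ be an element of the $(d-1)$-skeleton of $P(m)$, i.e. a $(d-1)$-dimensional generating piece. Since $P(m)$ is assembled from the $X_g$'s by gluing along $\widetilde{\Delta(m)}$ only, $\xi$ comes from a $(d-1)$-dimensional piece of some $X_g$: if it lies in $\widetilde{\Delta(m)}$ it is a $(d-1)$-face $\{u_{i_1},\dots,u_{i_d}\}$, whence $\xi=\sigma_d(u_{i_1}\dotsm u_{i_d})=\Phi_d(u_{i_1}\dotsm u_{i_d})$; otherwise it is the top piece of $X_g$ for a monomial $g$ with $\deg g=d$, whence $\xi=\Phi_d(g)$. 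Organising this degree by degree (induction on $d$, with base $d=1$ given by $X^0=\{u_1,\dots,u_m\}$) shows $\Phi_d$ is onto, hence a bijection; combined with the injectivity above this is the assertion.

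The main obstacle is precisely in the surjectivity step: one must make explicit what the ``elements of the $(d-1)$-skeleton of $P(m)$'' are and match them with the $X_g$. Concretely, one needs to describe carefully the cellular structure of $P(m)$ and the equivalence relation $\sim$, and to verify that the $(d-1)$-dimensional generating pieces of $P(m)$ are \emph{exactly} the $X_g$ with $\deg g=d$ — in particular that the lower-dimensional $\epsilon$-pieces internal to a complex $X_h$ with $\deg h>d$ are identified with (or recognised as) the piece attached to the appropriate monomial of degree $d$, so that nothing is over-counted. This is an unwinding of Construction~\ref{con:1} by induction on $d$ and on the number of variables; once this bookkeeping is in place, the three verifications above complete the argument.
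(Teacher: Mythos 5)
The paper itself offers no proof of this proposition: it is stated inside Construction~\ref{con:1} as an immediate consequence of how $P(m)$ is assembled, the bijection being exactly your $\Phi_d=\sigma'_d\colon g\mapsto X_g$. So your overall strategy (well-definedness, injectivity via recovering $g$ from $X_g$, surjectivity via the assembly of $P(m)$) is the intended one, and your injectivity mechanism---reading the support of $g$ off the $0$-skeleton $X_g^0$ and each exponent $j_k$ off the dimension of the join factor $\epsilon^{j_k-1}_k$---is exactly what the construction is designed to make possible.

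There is, however, a genuine problem, which you flag in your last paragraph but do not resolve, and which is sharper than ``bookkeeping'' because your own argument is internally inconsistent about it. For injectivity you assert that the only identifications made in forming $P(m)={C}/{\sim}$ occur inside the simplicial core $\widetilde{\Delta(m)}$, i.e.\ that $X_g$ and $X_h$ are glued only along $S_{X_g}\subset\widetilde{\Delta(m)}$ and $S_{X_h}\subset\widetilde{\Delta(m)}$ (which is what the text of Construction~\ref{con:1} literally says). Under that reading the bijection fails: the circle $\epsilon^1_1$ sitting inside $X_{u_1^2u_2}$ and the standalone complex $X_{u_1^2}$ would be two distinct elements of the $1$-skeleton of $P(3)$, both of which must correspond to the single monomial $u_1^2$. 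What is actually required---and what Proposition~\ref{prop:xx} and the Introduction's phrase about attaching ``along the common skeletons'' presuppose---is that $\sim$ identifies $X_g\cap X_h$ with $X_{\GCD(g,h)}$ for every pair of monomials, not merely their square-free simplicial parts. Once the equivalence relation is stated in that stronger form, your three verifications do go through: each $X_g$ still embeds, $g$ is still recoverable from $X_g$, and every $(d-1)$-cell of $P(m)$ is the top cell of $X_g$ for a unique degree-$d$ monomial $g$. As written, though, you prove injectivity for one equivalence relation and need surjectivity (no over-counting) for another; the definition of $\sim$ must be fixed first and the count of $(d-1)$-cells redone with respect to it.
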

  In other words, if we define
  \begin{equation*}
    \rho^{\prime}_d:=\{f\in \KK[u_1,\dotsc,u_m]_d\mid \textup{$f\neq 0$ is a monic monomial}\}
  \end{equation*}
  we have a bijection, using the above notation
  \begin{align*}
    \sigma^{\prime}_d\colon\rho^{\prime}_d & \to P(m)_{d-1}\\
    u_1^{j_1} \dotsm u_m^{j_m}&\mapsto X_{u_1^{j_1} \dotsm u_m^{j_m}}.
  \end{align*}
  \begin{proposition} \label{prop:xx}
    $X_{u_1^{j_1} \dotsm u_m^{j_m}}\subset X_{u_1^{k_1} \dotsm
      u_m^{k_m}}$ if and only if $u_1^{j_1} \dotsm u_m^{j_m}$ divides
    $u_1^{k_1} \dotsm u_m^{k_m}$.
  \end{proposition}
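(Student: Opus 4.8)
We prove both implications from the description of $X_g$ recorded in Construction~\ref{con:1} as the iterated \emph{join}
$X_g=\epsilon_1^{j_1-1}\ast\dotsm\ast\epsilon_m^{j_m-1}$ (with the conventions $\epsilon_k^{-1}=\emptyset$ and $Y\ast\emptyset=Y$), together with the fact that for each fixed $k$ the spaces $\emptyset=\epsilon_k^{-1}\subset\epsilon_k^{0}\subset\epsilon_k^{1}\subset\dotsb$ form a \emph{single} increasing chain of CW-complexes, $\epsilon_k^{s}$ being obtained from $\epsilon_k^{s-1}$ by attaching one $s$-cell $c_k^{s}$ (equivalently, the chain $\PP_k^{j_k-1}\hookrightarrow\PP_k^{j_k}$ of the Remark following Construction~\ref{con:1}). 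In particular $\epsilon_k^{s-1}$ is the $(s-1)$-skeleton of $\epsilon_k^{s}$, the only $0$-cell of $\epsilon_k^{s}$ is $u_k$, and $\epsilon_k^{s}$ has exactly one cell $c_k^{\ell}$ in each dimension $0\le\ell\le s$. Accordingly one regards $P(m)$ as the join $\epsilon_1^{\infty}\ast\dotsm\ast\epsilon_m^{\infty}$ with $\epsilon_k^{\infty}=\bigcup_{s}\epsilon_k^{s}$, so that each $X_g$ is a CW-subcomplex of $P(m)$ whose cells are the joins $c_{i_1}^{s_1}\ast\dotsm\ast c_{i_r}^{s_r}$ over distinct $i_1,\dotsc,i_r\in\{i:j_i\ge1\}$ with $0\le s_t\le j_{i_t}-1$.

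For the implication ``$\Leftarrow$'' I would argue as follows. If $u_1^{j_1}\dotsm u_m^{j_m}$ divides $u_1^{k_1}\dotsm u_m^{k_m}$, that is $j_k\le k_k$ for every $k$, then $\epsilon_k^{j_k-1}$ is a CW-subcomplex of $\epsilon_k^{k_k-1}$ for each $k$ (it is its $(j_k-1)$-skeleton, or $\emptyset$ when $j_k=0$). Since the join of CW-subcomplexes is a CW-subcomplex of the join of the ambient complexes, one gets
$X_g=\epsilon_1^{j_1-1}\ast\dotsm\ast\epsilon_m^{j_m-1}\subseteq\epsilon_1^{k_1-1}\ast\dotsm\ast\epsilon_m^{k_m-1}=X_h$,
and this is visibly an inclusion of subcomplexes inside $P(m)$.

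For ``$\Rightarrow$'' I would use a dimension count on the cells supported at a single vertex. Assume $X_g\subseteq X_h$; as both are CW-subcomplexes of $P(m)$, every cell of $X_g$ is a cell of $X_h$. Fix $i$ with $j_i\ge1$ (there is nothing to prove when $j_i=0$). Taking the one-element index set $\{i\}$ in the join, the top cell $c_i^{j_i-1}$ of $\epsilon_i^{j_i-1}$ is a cell of $X_g$, hence of $X_h$. Now the closure in $P(m)$ of a cell $c_{i_1}^{s_1}\ast\dotsm\ast c_{i_r}^{s_r}$ contains precisely the $0$-cells $u_{i_1},\dotsc,u_{i_r}$, so a cell whose closure contains a single $0$-cell must be a single cell $c_k^{s}$ lying in $\epsilon_k^{\infty}$. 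Since $c_i^{j_i-1}$ lies in $\epsilon_i^{j_i-1}$, whose only vertex is $u_i$, it is one of $c_i^{0},c_i^{1},\dotsc$, and comparing dimensions it equals $c_i^{j_i-1}$. But the cells of $X_h$ of this form are exactly $c_i^{0},\dotsc,c_i^{k_i-1}$ (none at all when $k_i=0$), all of dimension at most $k_i-1$; hence $j_i-1\le k_i-1$, i.e. $j_i\le k_i$. As $i$ was arbitrary, $u_1^{j_1}\dotsm u_m^{j_m}$ divides $u_1^{k_1}\dotsm u_m^{k_m}$.

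The step I expect to be the real work is the bookkeeping underlying the first paragraph: checking that Construction~\ref{con:1} indeed produces, for each variable $u_k$, one nested chain $\epsilon_k^{-1}\subset\epsilon_k^{0}\subset\epsilon_k^{1}\subset\dotsb$ of skeleta, that the join decompositions of $X_g$ and of $P(m)$ are compatible with these chains, and that each $X_g$ really is a CW-subcomplex of $P(m)$ with the cells described above. Once this is in place, both directions reduce, as above, to monotonicity of the join and the observation that a cell whose closure meets the $0$-skeleton of $P(m)$ in one point lies in a single factor $\epsilon_k^{\infty}$ and is therefore pinned down by its dimension.
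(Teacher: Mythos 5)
Your argument is correct, and it is worth noting that the paper itself offers no proof of Proposition~\ref{prop:xx}: it is stated inside Construction~\ref{con:1} as an immediate consequence of the construction. Your proof is exactly the justification that construction implicitly appeals to --- reading $X_g$ as the join $\epsilon_1^{j_1-1}\ast\dotsm\ast\epsilon_m^{j_m-1}$ (equivalently $\PP_1^{j_1-1}\ast\dotsm\ast\PP_m^{j_m-1}$, as in the Remark), with divisibility of monomials translating into skeleton inclusions $\epsilon_k^{j_k-1}\subseteq\epsilon_k^{k_k-1}$ factorwise, and the converse recovered by locating the single-vertex cells of $X_g$ inside $X_h$ and comparing dimensions. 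The one place where you are doing genuinely necessary work, rather than bookkeeping, is the first paragraph: the paper's gluing prescription literally says that $X_g$ and $X_h$ are attached along $S_X,S_Y\subset\widetilde{\Delta(m)}$, i.e.\ along the spans of their $0$-skeletons, which would leave, say, the circle $\epsilon_1^{1}$ duplicated in $X_{u_1^2u_2}$ and $X_{u_1^2u_3}$ and would falsify both the stated bijection $\sigma'_d$ and the forward implication of Proposition~\ref{prop:xx}. Your join model of $P(m)$ silently replaces this with gluing along the common subcomplex $X_{\gcd(g,h)}$, which is the only reading under which the proposition (and the rest of Section~\ref{sec:3}) holds; it would be worth making that correction explicit rather than leaving it in the ``bookkeeping'' disclaimer of your last paragraph.
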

  Let $\displaystyle f=\sum_{i=0}^n x_i^{d_1}g_i\in S_{(d_1,d_2)}$ be
  a CW-Nagata polynomial; by hypothesis there is bijection between the
  monomials $g_i$ and the indeterminates $x_i$. From  this, we
  can associate to $f$ a finite $(d_2-1)$-dimensional, CW-subcomplex
  of $P(m)$, $\Delta_f$ where the $(d_2-1)$-skeleton is given by the
  $X_{g_i}$'s glued together with the above procedure. Each $X_{g_i}$
  can be identified with $x_i^{d_1}$ as before.
\end{construction}
The previous construction generalizes the analogous one given
in~\cite{CGIM}.

\subsection{The Hilbert Function of SBAG Algebras}

The first main result of this paper is the following general theorem.
\begin{remark}\label{rmk:1}
  In order to state it, we observe that the canonical bases of
  \begin{displaymath}
    S_{(d_1,d_2)}=\KK[x_0,\dotsc,x_n]_{d_1}\otimes\KK[u_1,\dotsc,u_m]_{d_2}
  \end{displaymath}
  and
  \begin{displaymath}
    T_{(d_1,d_2)}=\KK[X_0,\dotsc,X_n]_{d_1}\otimes\KK[U_1,\dotsc,U_m]_{d_2}
  \end{displaymath}
  given by monomials are dual bases each other, i.e.
  \begin{equation*}
    X_0^{k_0}\dotsm X_n^{k_n}U_1^{\ell_1}\dotsm U_m^{\ell_m} (x_0^{i_0}\dotsm x_n^{i_n}u_1^{j_1}\dotsm u_m^{j_m})=\delta^{i_0,\dotsc,i_n,j_1,\dotsc,j_m}_{k_0,\dotsc,k_n,\ell_1,\dotsc,\ell_m}
  \end{equation*}
  where $i_0+\dotsm+i_n=k_0+\dotsm+k_n=d_1$,
  $j_1+\dotsm+j_m=\ell_1+\dotsm+\ell_m=d_2$ and
  $\delta^{i_0,\dotsc,i_n,j_1,\dotsc,j_m}_{k_0,\dotsc,k_n,\ell_1,\dotsc,\ell_m}$
  is the Kronecker delta.

  This simple observation allows us to identify---given a CW-Nagata
  polynomial
  $\displaystyle f=\sum_{r=0}^n x_r^{d_1}g_r\in S_{(d_1,d_2)}$---
  the \emph{dual differential operator $G_r$} of the monomial
  $g_r$---i.e. the monomial $G_r\in \KK[U_1,\dotsc,U_m]_{d_2}$ such
  that $G_r(g_r)=1$ and $G_r(g)=0$ for any other monomial
  $g\in \KK[u_1,\dotsc,u_m]_{d_2}$---with the same element of the
  $(d_2-1)$-skeleton of $
  {\Delta}_f$ associated to $g_r$. In other words, we associate to
  $g_r=u_{1}^{j_1}\dotsm u_{m}^{j_m}$ and to
  $G_r=U_{1}^{j_1}\dotsm U_{m}^{j_m}$ the CW-subcomplex of
  $\Delta_f\subset P(m)$, $X_{u_{1}^{j_1}\dotsm u_{m}^{j_m}}$.
\end{remark}

\begin{theorem}\label{thm:1}
  Let $\displaystyle f=\sum_{r=0}^nx^{d_1}_rg_r\in R_{(d_1,d_2)}$,
  with $g_r=u_{1}^{j_1}\dotsm u_{m}^{j_m}$, be a 
  CW-Nagata polynomial of (positive) degree $d_1$, where
  $\displaystyle n\leq\binom{m}{d_2}$, let $\Delta_f$ be the
  CW-complex associated to $f$ and let $A=Q/\Ann(f)$. Then
  \begin{equation*}
    A=\bigoplus_{h=0}^{d=d_1+d_2}A_h
  \end{equation*}
  where
  \begin{equation*}
    A_h=A_{(h,0)}\oplus\dotsb\oplus A_{(p,q)}\oplus\dotsb\oplus A_{(0,h)},\,p\leq d_1,\,q\leq d_2,\,A_d=A_{(d_1,d_2)} 
  \end{equation*}
  and moreover, $\forall j\in\{0,1,\dotsc,d_2\}$,
  \begin{equation*}
    \dim A_{(i,j)}=a_{i,j}=\begin{cases}
      f_j & i=0\\
      \displaystyle\sum_{r=0}^n f_{j,r} & i\in\{1,\dotsc,d_1-1\},\\  
      f_{d_2-j} & i=d_1
    \end{cases}
  \end{equation*}
  where:
  \begin{itemize}
  \item $f_j$ is the number of the elements of the $(j-1)$-skeleton of
    the CW-complex $\Delta_f$ (with the convention that $f_0=1$);
  \item $f_{j,r}$ is the number of the elements of the
    $(j-1)$-skeleton of the CW complex $X_{G_r}$ (with the convention
    that $f_{0,r}=1$, so that $\dim A_{(i,0)}=n+1$).
  \end{itemize}
  More precisely, a basis for $A_{(i,j)}$,
  $\forall j\in\{0,1,\dotsc,d_2\}$, is given by
  \begin{enumerate}
  \item\label{1:1} If $i=0$, $\{\Omega_1,\dotsc,\Omega_{f_j}\}$, where
    any $\Omega_s:=U_{1}^{s_1}\dotsm U_m^{s_{m}}$, with
    $s_1+\dotsm+s_m=j$, is associated to the element
    $X_{u_{1}^{s_1}\dotsm u_m^{s_{m}}}$ of the $(j-1)$-skeleton of
    $\Delta_f$;
  \item\label{1:2} If $i=1,\dotsc, d_1-1$, $\left\{\Omega_s^{i,s_1,\dotsc,s_m}\right\}_{\substack{s\in\{0,\dotsc,n\}\\
        s_k\le,r_k,k=1,\dotsc,m\\ \sum_k s_k=j}}$ where
    $\Omega_s^{i,s_1,\dotsc,s_m}:=X_s^i\cdot U_{1}^{s_1}\dotsm
    U_m^{s_{m}}$ is associated to the element
    $X_{u_{1}^{s_1}\dotsm u_m^{s_{m}}}$ of the $(j-1)$-skeleton of
    $X_{g_s}$;
  \item\label{1:3} If
    $i=d_1,\,\left\{X_0^{d_1}\Omega_1(f),\dotsc
      X_n^{d_1}\Omega_{f_{d_2}-j}(f)\right\}$, where
    $\left\{\Omega_1,\dotsc\Omega_{f_{d_2-j}}\right\}$ is the basis
    for $A_{(0,d_2-j)}$ of case \eqref{1:1}.
  \end{enumerate}
  In the cases \eqref{1:1} and \eqref{1:2} the basis are given by
  monomials, in the case \eqref{1:3}, in general, not.
\end{theorem}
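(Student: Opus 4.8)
The plan is to compute the algebra $A = Q/\Ann(f)$ degree-by-degree, exploiting the short exact sequence \eqref{SES} and the explicit dual-basis pairing recalled in Remark \ref{rmk:1}. First I would set up the bigraded decomposition: since $f \in S_{(d_1,d_2)}$ is a Nagata polynomial, the remark preceding Theorem \ref{th3.1} gives $A_k = \bigoplus_{i+j=k} A_{(i,j)}$ with $0 \le i \le d_1$, $0 \le j \le d_2$, and $A_{(d_1,d_2)} \cong \KK$. So it suffices to identify, for each bidegree $(i,j)$, a basis of $A_{(i,j)} \cong T_{(i,j)}/I_{(i,j)}$, equivalently (by \eqref{SES}) of the image of the evaluation map $T_{(i,j)} \to S_{(d_1-i, d_2-j)}$, $\alpha \mapsto \alpha(f)$. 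The key structural fact is that $f = \sum_r x_r^{d_1} g_r$ separates variables: applying a monomial operator $X_0^{k_0}\cdots X_n^{k_n} U_1^{\ell_1}\cdots U_m^{\ell_m}$ to $f$ kills all but the terms $x_r^{d_1} g_r$ for which $X_r^{k_r}$ does not annihilate $x_r^{d_1}$, i.e. $k_r \le d_1$; and since the $x_r$ are distinct variables, for $1 \le i \le d_1 - 1$ a pure-$x$-degree-$i$ operator with support spread over two or more indices $r$ annihilates $f$, while $X_r^i$ sends $x_r^{d_1}g_r$ to a nonzero multiple of $x_r^{d_1-i} g_r$.

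Next I would treat the three ranges of $i$ separately, as in the statement. For $i = 0$: the operator is purely in the $U$'s, say $\Omega = U_1^{s_1}\cdots U_m^{s_m}$ with $\sum s_k = j$, and $\Omega(f) = \sum_r c_{r,\Omega}\, x_r^{d_1}\, \partial_\Omega(g_r)$ where $\partial_\Omega(g_r) \ne 0$ precisely when the exponent vector $(s_1,\dots,s_m)$ is dominated by that of $g_r$, i.e.\ $u_1^{s_1}\cdots u_m^{s_m} \mid g_r$. By Proposition \ref{prop:xx}, such $\Omega$'s are in bijection with the elements $X_{u_1^{s_1}\cdots u_m^{s_m}}$ of the $(j-1)$-skeleton of $\Delta_f$; two such $\Omega$'s give linearly independent images (the $x_r^{d_1}$ are distinct monomials and the surviving $g_r$-derivatives are monomials), and any $\Omega$ \emph{not} dividing some $g_r$ lies in $I$. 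Hence $\dim A_{(0,j)} = f_j$, the number of elements of the $(j-1)$-skeleton of $\Delta_f$, with basis as in \eqref{1:1}. For $1 \le i \le d_1-1$: by the separation-of-variables remark above, a basis of the image consists of the $X_r^i\cdot U_1^{s_1}\cdots U_m^{s_m}$ with $u_1^{s_1}\cdots u_m^{s_m}\mid g_r$ — these map to linearly independent $x_r^{d_1-i}\cdot(\text{monomial in }u)$ — so the count is $\sum_r f_{j,r}$ with $f_{j,r}$ the number of elements of the $(j-1)$-skeleton of $X_{G_r}$ (equivalently of $X_{g_r}$ under the identification of Remark \ref{rmk:1}), giving \eqref{1:2}; the case $j=0$ reads $\dim A_{(i,0)} = n+1$. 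For $i = d_1$: by Gorenstein duality (Remark \ref{rmk:41}) $A_{(d_1,j)} \cong A_{(0, d_2-j)}^\vee$, so its dimension is $f_{d_2-j}$; a basis is obtained by taking the dual basis, which concretely is $\{X_r^{d_1}\Omega_t(f)\}$ running over $r$ — no, more precisely, applying $X_r^{d_1}$ to $\Omega_t(f)$ lands in $A_{(0,\cdot)}$ composed back up; the cleanest route is to note that $A_{(d_1, d_2-j)}$ is spanned by $\{X_r^{d_1}\Omega_t : t\}$ whose images $X_r^{d_1}\Omega_t(f)$ are (generally non-monomial) elements of $S_{(0,j)}$, and to check this spanning set has the right rank $f_{d_2-j}$ by pairing against the basis of $A_{(0,j)}$ via the perfect pairing $A_{(d_1,d_2-j)} \times A_{(0,j)} \to A_d$. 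This last pairing computation is where the ``in general, not monomials'' phenomenon of \eqref{1:3} appears and must be handled carefully.

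The main obstacle I anticipate is the top-strip case $i = d_1$: one must show the explicitly listed elements $X_r^{d_1}\Omega_t(f)$ actually form a basis (not merely a spanning set) of $A_{(d_1,j)}$, and that requires showing the pairing matrix against the monomial basis $\{\Omega_1,\dots,\Omega_{f_j}\}$ of $A_{(0,j)}$ is nonsingular — which reduces, after unwinding, to the fact that $\Omega_t \cdot X_r^{d_1}\Omega_{t'}(f) \ne 0$ in $A_d \cong \KK$ exactly when $\Omega_t \Omega_{t'}$ divides $g_r$ (in the $U$-exponent sense) and $\deg$ matches, and then to a combinatorial incidence argument on the skeleton of $\Delta_f$ analogous to the one in \cite{CGIM}. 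The remaining work — verifying linear independence in ranges \eqref{1:1}, \eqref{1:2} and that everything else lies in $\Ann(f)$ — is routine bookkeeping with the dual-basis pairing of Remark \ref{rmk:1} together with Propositions \ref{prop:xx} and the bijection $\sigma'_d$ from Construction \ref{con:1}.
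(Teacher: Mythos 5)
Your proposal is correct, and for the two main strips ($i=0$ and $1\le i\le d_1-1$) it is essentially identical to the paper's proof: the same observation that $X_aX_b(f)=0$ for $a\neq b$ reduces everything to operators of the form $X_s^i\,U_1^{s_1}\dotsm U_m^{s_m}$, the same translation of ``$\Omega(g_r)\neq 0$'' into divisibility and hence (via Proposition \ref{prop:xx}) into membership of $X_{u_1^{s_1}\dotsm u_m^{s_m}}$ in the relevant skeleton, and the same linear-independence argument obtained by applying a linear combination to $f$ and separating terms first by the linearly independent $x_r^{d_1-i}$ and then by the distinct monomials $\Omega(g_r)$ for fixed $r$. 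The only genuine divergence is the top strip $i=d_1$, which you flag as the ``main obstacle'' and propose to settle by showing that the pairing matrix of $\{X_r^{d_1}\Omega_t(f)\}$ against a monomial basis of the complementary piece is nonsingular, via a combinatorial incidence argument on $\Delta_f$. The paper avoids this entirely: the dimension $f_{d_2-j}$ comes at once from Gorenstein duality $A_{(d_1,j)}\cong A_{(0,d_2-j)}^{\vee}$ (Remark \ref{rmk:41}), and the basis comes directly from the exactness of the evaluation sequence \eqref{SES} in bidegree $(0,d_2-j)$, namely $0\to I_{(0,d_2-j)}\to Q_{(0,d_2-j)}\to A_{(d_1,j)}\to 0$: the images $\Omega_t(f)$ of any basis of $Q_{(0,d_2-j)}/I_{(0,d_2-j)}$ are automatically a basis, with no rank computation needed. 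So your approach would work but does extra (and, in your sketch, somewhat index-muddled --- the space dual to $A_{(d_1,j)}$ is $A_{(0,d_2-j)}$, not $A_{(0,j)}$) labor that the exact sequence makes unnecessary; conversely, your pairing formulation has the minor advantage of making explicit \emph{why} the resulting representatives fail to be monomials in general.
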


\begin{proof}
  We divide the proof into computing the dimension of $A_{(i,j)}$ and
  find a basis for it, as $i$ varies:

  \begin{itemize}

  \item[$i=0$:]  $A_{(0,0)}\cong\KK$.

    Then, by definition, if $j\in\{1,\dotsc,d_2\}$, $A_{(0,j)}$ is
    generated by the  (canonical images of the) monomials
    $\Omega_s\in Q_j=\KK[U_1,\dotsc,U_m]_j\cong Q_{(0,j)}$ that do not
    annihilate $f$. This means that, if we write
    \begin{align*}
      \Omega_s&=U_{1}^{s_1}\dotsm U_m^{s_{m}} & s_1+\dotsm+s_m&=j,
    \end{align*}
    there exists an $r_s\in\{0,\dotsc,n\}$ such that
    $g_{r_s}=u_{1}^{s_1}\dotsm u_m^{s_{m}}g^{\prime}_{r_s}$, where
    $g^{\prime}_{r_s}\in R_{d_2-j}$ is a (nonzero) monomial; this
    means that $X_{u_{1}^{s_1}\dotsm u_m^{s_{m}}}$ is an element of
    the $(j-1)$-skeleton of the CW-complex $\Delta_f$ by Proposition
    \ref{prop:xx}.

    We need  to prove that these monomials are linearly independent
    over $\KK$: let $\{\Omega_1,\dotsc,\Omega_{f_j}\}$ be a system of
    monomials of $Q_{(0,j)}$, where any
    $\Omega_s=U_{1}^{s_1}\dotsm U_m^{s_{m}}$ with $s_1+\dotsm+s_m=j$,
    is associated to an element of the $(j-1)$-skeleton of the
    CW-complex $\Delta_f$; take a linear combination of them and apply
    it to $f$:
    \begin{equation*}
      0=\sum_{s=1}^{f_j}c_s\Omega_s(f)=\sum_{s=1}^{f_j}c_s\sum_{r=0}^nx_r^{d_1}\Omega_s(g_r)=\sum_{r=0}^nx_r^{d_1}\sum_{s=1}^{f_j}c_s\Omega_s(g_r). 
    \end{equation*}
    By the linear independence of the $x_r^{d_1}$'s
    \begin{align}\label{eq:omega}
      \sum_{s=1}^{f_j}c_s\Omega_s(g_r)&=0,&\forall r&\in\{0,\dotsc,n\}.
    \end{align}
    By hypothesis, for any index $s$ there exists an
    $r_s\in\{0,\dotsc,n\}$ such that
    $\Omega_{s}(g_{r_s})=g^{\prime}_{r_s}\in R_{d_2-j}\setminus\{0\}$,
    then for any index $s$ one has $c_s=0$, since the linear
    combinations in \eqref{eq:omega} are formed by linearly
    independent monomials ($g_r$ is fixed in each linear
    combination!). In other words, $\dim A_{(0,j)}=f_j$.

  \item[$0<i<d_1$:]  Observe that $X_a X_b(f)=0$ if
    $a\neq b$.  Therefore $A_{(i,j)}$ is generated by the only
    (canonical images of) the monomials
    $\Omega_s^{i,s_1,\dotsc,s_m}:=X_s^iU_{1}^{s_1}\dotsm
    U_{m}^{s_m}\in Q_{(i,j)}$, with $s_1+\dotsm+ s_m=j$, that do not
    annihilate $f$. In particular, a basis for $A_{(i,0)}$ is given by
    $X_0^i,\dotsc,X_n^i$ and we can suppose from now on that
    $j>0$. Since
    \begin{equation*}
      \Omega_s^{i,s_1,\dotsc,s_m}(f)=x_s^{d_1-i}\left(U_{1}^{s_1}\dotsm U_{m}^{s_m}\right)(g_s),
    \end{equation*}
    in order to obtain that this is not zero, we must have that
    $g_s=u_{1}^{s_1}\dotsm u_{m}^{s_m}g^{\prime}_s$, where
    $g^{\prime}_{s}\in R_{d_2-j}$ is a nonzero monomial.  This means
    $X_{u_{1}^{s_1}\dotsm u_{m}^{s_m}}\subset X_{g_s}$ by Proposition
    \ref{prop:xx}.

    As above, we can prove that these monomials are linearly
    independent over $\KK$:
    let $$\left\{\Omega_s^{i,s_1,\dotsc,s_m}\right\}_{\substack{s\in\{0,\dotsc,n\}\\ s_k\le,r_k, k=1,\dotsc,m\\
        \sum_k s_k=j}}$$ be a system of monomials of $Q_{(i,j)}$, where
    any
    $\Omega_s^{i,s_1,\dotsc,s_m}=X_s^i\cdot U_{1}^{s_1}\dotsm
    U_{m}^{s_m}$ is associated to the element
    $X_{u_{1}^{s_1}\dotsm u_{m}^{s_m}}$ of the $(j-1)$ skeleton of
    $X_{g_s}\subset\Delta_f$, i.e.
    $X_{u_{1}^{s_1}\dotsm u_{m}^{s_m}}\subset X_{g_s}\subset\Delta_f$
    by Proposition \ref{prop:xx}.

    Take a linear combination of them and apply it to $f$:
    \begin{equation}\label{eq:tanti}
      0=\sum_{\substack{s\in\{0,\dotsc,n\}\\ s_k\le,r_k, k=1,\dotsc,m\\
          \sum_k s_k=j}}c_s^{i,s_1,\dotsc,s_m}\Omega_s^{i,s_1,\dotsc,s_m}(f)
      =\sum_{s=0}^{n}x^{d_1-i}\sum_{\substack{s_k\le,r_k, k=1,\dotsc,m\\
          \sum_k s_k=j}}c_s^{i,s_1,\dotsc,s_m}g_s^{i,s_1,\dotsc,s_m}
    \end{equation}
    where $g_s^{i,s_1,\dotsc,s_m}\in R_{d_2-j}$ is the nonzero
    monomial such that
    $g_s=u_{1}^{s_1}\dotsm u_{m}^{s_m}g_s^{i,s_1,\dotsc,s_m}$.  From
    \eqref{eq:tanti} we deduce, as in the preceding case, that
    \begin{align}\label{eq:bis}
      \sum_{\substack{s_k\le,r_k, k=1,\dotsc,m\\
      \sum_k s_k=j}}c_s^{i,s_1,\dotsc,s_m}g_s^{i,s_1,\dotsc,s_m}&=0 & s&=0,\dotsc, n;
    \end{align}
    as before, given one choice of $s_1,\dotsc,s_m$ there exists an
    $s\in\{0,\dotsc,n\}$ such $g_s^{i,s_1,\dotsc,s_m}(f)$ is a nonzero
    monomial, and the (nonzero) $g_s^{i,s_1,\dotsc,s_m}$'s in
    \eqref{eq:bis} are linearly independent since are obtained by a
    fixed $g_s$.

  \item[$i=d_1$:] By duality, see Remark \ref{rmk:41},
    $A_{(d_1,j)}\cong A^{\vee}_{(0,d_2-j)}$ so
    $\dim A_{(d_1,j)}=f_{d_2-j}$.  To find a basis for $A_{(d_1,j)}$,
    we consider the exact sequence \eqref{SES} given by evaluation at 
    $f$, which in this case reads
    \begin{equation}\label{eq:seq}
      0\to I_{(0,d_2-j)}\to Q_{(0,d_2-j)}\to A_{(d_1,j)}\to 0,
    \end{equation}
    then a basis for $A_{(d_1,j)}$ is obtained in the following way:
    if $\{\Omega_1,\dotsc\Omega_{f_{d_2-j}}\}$ is the basis for
    $A_{(0,d_2-j)}\cong Q_{(0,d_2-j)}/I_{(0,d_2-j)}$ of the case
    $i=0$, then a basis for $A_{(d_1,j)}$ is
    $\left\{X_0^{d_1}\Omega_1,\dotsc,X_n^{d_1}\Omega_{f_{d_2}-j}(f)\right\}$.
  \end{itemize}
\end{proof}

As a corollary of Theorem \ref{thm:1} we see that we can deduce the
general case of the simplicial Nagata polynomial, which is a
slight improvement of the first part of \cite[Theorem 3.5]{CGIM}.

\begin{corollary}\label{cor:1}
  Let $\displaystyle f=\sum_{r=0}^nx^{d_1}_rg_r\in R_{(d_1,d_2)}$,
  with $g_r=x_{r_1}\dotsm x_{r_{d_2}}$, be a simplicial Nagata
  polynomial of (positive) degree $d_1$, where
  $\displaystyle n\leq\binom{m}{d_2}$, let ${\Delta}_f$ be the
  simplicial complex 
  associated to $f$ and let $A=Q/\Ann(f)$. Then
  \begin{equation*}
    A=\bigoplus_{h=0}^{d=d_1+d_2}A_h
  \end{equation*}
  where
  \begin{equation*}
    A_h=A_{(h,0)}\oplus\dotsb\oplus A_{(p,q)}\oplus\dotsb\oplus A_{(0,h)},\,p\leq d_1,\,q\leq d_2,\,A_d=A_{(d_1,d_2)} 
  \end{equation*}
  and moreover,
    $\forall j\in\{0,1,\dotsc,d_2\}$,
  \begin{equation*}
    \dim A_{(i,j)}=a_{i,j}=\begin{cases}
      f_j & i=0\\
      \displaystyle\sum_{r=0}^n f_{j,r} & i\in\{1,\dotsc,d_1-1\},\\  
      f_{d_2-j} & i=d_1
    \end{cases}
  \end{equation*}

  where:
  \begin{itemize}
      \item $f_j$ is the number of $(j-1)$-cells of the 
    ${\Delta}_f$ (with the convention that $f_0=1$);
  \item $f_{j,r}$ is the number of $(j-1)$-subcells of $\Delta_{g_r}$,
    i.e. the $(d_2-1)$-cell 
    of the 
    ${\Delta}_f$ associated to $g_r$ (with the convention that
    $f_{0,r}=1$, so that $\dim A_{(i,0)}=n+1$). 
  \end{itemize}
  More precisely, a basis for $A_{(i,j)}$,
  $\forall j\in\{0,1,\dotsc,d_2\}$, is given by
  \begin{enumerate}
  \item\label{c1:1} If $i=0$,
    $\{\Omega_1,\dotsc,\Omega_{f_j}\}$, 
    where any $\Omega_s:=U_{s_1}\dotsm U_{s_{j}}$ is associated to the
    $(j-1)$-subcell $\{u_{s_1},\dotsc, u_{s_{j}}\}$ of ${\Delta}_f$;
  \item\label{c1:2} If $i=1,\dotsc, d_1-1,\,\left\{\Omega_s^{i,s_1,\dotsc,s_j}\right\}_{\substack{s\in\{0,\dotsc,n\}\\
        s_1,\dotsc,s_j\in\left\{r_1,\dotsc,r_{d_2}\right\}}}$ where
    $\Omega_s^{i,s_1,\dotsc,s_j}:=X_s^iU_{s_1}\dotsm U_{s_j}$ is
    associated to the $(j-1)$-subcell $\{u_{s_1},\dotsc, u_{s_{j}}\}$
    of
    $\Delta_{g_s}(\subset\Delta_f)$; 
  \item\label{c1:3} If $i=d_1$,
    $\left\{X_0^{d_1}\Omega_1(f),\dotsc
      X_n^{d_1}\Omega_{f_{d_2}-j}(f)\right\}$, where
    $\{\Omega_1,\dotsc\Omega_{f_{d_2-j}}\}$ is the basis for
    $A_{(0,d_2-j)}$ of case \eqref{1:1}.
  \end{enumerate}
  In the cases \eqref{c1:1} and \eqref{c1:2} the bases are given by
  monomials, in the case \eqref{c1:3},
    in general, not.
\end{corollary}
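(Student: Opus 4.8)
The plan is to derive Corollary~\ref{cor:1} as a direct specialization of Theorem~\ref{thm:1}, observing that the square-free hypothesis on the monomials $g_r$ collapses all the exotic CW-pieces $\epsilon_k^{j_k-1}$ appearing in Construction~\ref{con:1} down to genuine simplices. Concretely, when every $g_r=x_{r_1}\dotsm x_{r_{d_2}}$ is square-free, each exponent $j_k$ is $0$ or $1$, so in Construction~\ref{con:1} no identifications are performed and $X_{g_r}$ is literally the geometric realization of the $(d_2-1)$-simplex on vertex set $\{u_{r_1},\dotsc,u_{r_{d_2}}\}$; likewise $X_{G_r}$ is the corresponding simplex on $\{U_{r_1},\dotsc,U_{r_{d_2}}\}$. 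Hence $\Delta_f$, the CW-subcomplex of $P(m)$ associated to $f$, coincides with the simplicial complex $\Delta_f$ of Remark~\ref{rem1}(2), and an element of its $(j-1)$-skeleton is exactly a $(j-1)$-face, i.e. a subset $\{u_{s_1},\dotsc,u_{s_j}\}$ with $\{u_{s_1},\dotsc,u_{s_j}\}\subseteq\{u_{r_1},\dotsc,u_{r_{d_2}}\}$ for some $r$. This translates the phrase ``number of elements of the $(j-1)$-skeleton'' into ``number of $(j-1)$-cells/faces'', matching the statement.

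First I would record the identification $X_{u_1^{j_1}\dotsm u_m^{j_m}}=\widetilde{\{u_i\mid j_i=1\}}$ in the square-free case and note that Proposition~\ref{prop:xx} becomes the familiar statement that one face is contained in another iff the corresponding subset is contained in the other (equivalently, one square-free monomial divides the other). Then I would invoke Theorem~\ref{thm:1} verbatim: the decomposition $A=\bigoplus_h A_h$ with $A_h=\bigoplus_{p+q=h}A_{(p,q)}$, the dimension formula $\dim A_{(i,j)}=a_{i,j}$ with the three cases $i=0$, $0<i<d_1$, $i=d_1$, and the explicit monomial bases for cases \eqref{1:1} and \eqref{1:2}. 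The only substitution needed is notational: $U_1^{s_1}\dotsm U_m^{s_m}$ with $\sum s_k=j$ and $s_k\le r_k$ becomes $U_{s_1}\dotsm U_{s_j}$ with $\{s_1,\dotsc,s_j\}\subseteq\{r_1,\dotsc,r_{d_2}\}$, since all $s_k\in\{0,1\}$. The $i=d_1$ case transfers with no change at all, because case \eqref{1:3} of Theorem~\ref{thm:1} is already stated abstractly in terms of the $i=0$ basis.

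For the last clause — that in case \eqref{c1:3} the basis is in general not monomial — I would simply point to the same phenomenon in Theorem~\ref{thm:1}: the basis $\{X_r^{d_1}\Omega_t(f)\}$ lives in $A_{(d_1,j)}$ and $X_r^{d_1}\Omega_t(f)=x_r^{d_2-j}\Omega_t(g_r)$, which need not be (the image of) a monomial operator once one rewrites it inside $A$ via the pairing, and an explicit small example (e.g. $d_1=1$, two variables, $f=x_0u_1+x_1u_2$ as in the classical idealization) already exhibits non-monomial socle-adjacent bases.

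I do not expect a genuine obstacle here: Corollary~\ref{cor:1} is a dictionary translation of Theorem~\ref{thm:1}, and the only thing requiring a word of care is the compatibility of the two meanings of ``$\Delta_f$'' — the CW one from Construction~\ref{con:1} and the simplicial one from Remark~\ref{rem1}(2) — which is exactly the content of the observation that square-free monomials trigger no identifications. If anything, the mild danger is purely bookkeeping: making sure the index conditions ``$s_1,\dotsc,s_j\in\{r_1,\dotsc,r_{d_2}\}$'' in case \eqref{c1:2} are read as an unordered-subset condition (so that permutations of the $s_k$ are not counted twice), which follows since $U_{s_1}\dotsm U_{s_j}$ is symmetric in its indices and the $s_k$ are forced to be distinct in the square-free setting. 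Thus the proof is: ``Square-free monomials undergo no identification in Construction~\ref{con:1}, so $\Delta_f$ is a simplicial complex, its $(j-1)$-skeleton elements are its $(j-1)$-faces, and Theorem~\ref{thm:1} applies, yielding the stated formulas and bases after the evident relabelling.''
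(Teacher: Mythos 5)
Your proposal is correct and matches the paper's approach exactly: the paper states Corollary~\ref{cor:1} as an immediate specialization of Theorem~\ref{thm:1} (giving no separate proof), and your argument simply fills in the routine dictionary between the two statements, namely that square-free monomials force all exponents $j_k\le 1$ so Construction~\ref{con:1} performs no identifications, $X_{g_r}$ is the honest $(d_2-1)$-simplex on $\{u_{r_1},\dotsc,u_{r_{d_2}}\}$, and the skeleton elements of the CW-complex $\Delta_f$ are precisely the faces of the simplicial complex of Remark~\ref{rem1}. No gap; your extra care about reading the index condition in case \eqref{c1:2} as an unordered-subset condition is a sensible clarification the paper leaves implicit.
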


\begin{theorem}\label{thm:2}
  Let $\displaystyle f=\sum_{r=0}^n x^{d_1}_r g_r\in S_{(d_1,d_2)}$,
  with $g_r=x_{1}^{r_1}\dotsm x_{m}^{r_m}$ such that
  $r_1+\dotsm+r_m=d_2$, be a 
  CW-Nagata polynomial whose associated CW-complex is $\Delta_f$, as in
  the preceding theorem.

  Then $I:=\Ann(f)$ is generated by:
  \begin{enumerate}
  \item\label{2:1} $X_i X_j$ and $X_k^{d_1+1}$, for
    $i,j,k\in\{0,\dotsc,n\}$, $i<j$;
  \item\label{2:2} $\gen{U_1,\dotsc,U_m}^{d_2+1}$, i.e. all the
    (monic) monomials of degree $d_2+1$;

  \item\label{2:3} The monomials $U_{1}^{s_1}\dotsm U_{m}^{s_m}$ such
    that $s_1+\dotsm+ s_m=j$, where
    $X_{u_{1}^{s_1}\dotsm u_{m}^{s_m}}$ is a (minimal) element of the
    $(j-1)$-skeleton of $P(m)$ not contained in $\Delta_f$ (for
    $j\in\{1,\dotsc,d_2\}$);
  \item\label{2:4} The monomials $X_rU_i$, where
    $u_i$ 
    does not divide $g_r$ (i.e. $\{u_{i}\}$ is not an element of the
    $0$-skeleton 
    of $X_{g_r}$);
      \item\label{2:5} The monomials $X_s U_{1}^{r_1}\dotsm U_{m}^{r_m}$
        such that $r_1+\dotsm+ r_m=j$, where
    $u_{1}^{r_1}\dotsm u_{m}^{r_m}$ is minimal among those that do not
    divide $g_s$ (i.e. the (minimal) element of the $(j-1)$-skeleton
    of $P(m)$, $X_{u_{1}^{r_1}\dotsm u_{m}^{r_m}}$, is not contained
    in $X_{g_s}$),
        for 
    $j\in\{1,\dotsc,d_2\}$;
  \item\label{2:6} The binomials
    $X_r^{d_1}U_1^{\rho_1}\dotsm
    U_m^{\rho_m}-X_s^{d_1}U_1^{\sigma_1}\dotsm U_m^{\sigma_m}$ with
    $\rho_1+\dotsm+\rho_m=\sigma_1+\dotsm+\sigma_m=j$ such that
    $g_{r,s}=\GCD(g_r,g_s)$ and
    $g_r=u_1^{\rho_1}\dotsm u_m^{\rho_m}g_{r,s}$,
    $g_s=u_1^{\sigma_1}\dotsm u_m^{\sigma_m}g_{r,s}$
    (i.e. $X_{g_{r,s}}$ is the element of the $(d_2-j-1)$-skeleton of
    $\Delta_f$ which represents the intersection of $X_{g_r}$ and
    $X_{g_s}$: $X_{g_{r,s}}=X_{g_r}\cap X_{g_s}$).
  \end{enumerate}
\end{theorem}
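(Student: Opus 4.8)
The plan is to compute $I=\Ann(f)$ degree by degree (more precisely, bidegree by bidegree), using the short exact sequence \eqref{SES} to control dimensions and Theorem~\ref{thm:1} to supply an explicit $\KK$-basis of each $A_{(i,j)}$; then to exhibit the listed operators as elements of $I$ and argue that they generate everything not in the span of that basis. The starting observation is that a monomial operator $X_0^{k_0}\dotsm X_n^{k_n}U_1^{\ell_1}\dotsm U_m^{\ell_m}$ acts on $f=\sum_r x_r^{d_1}g_r$ by killing all terms whose $x$-part it does not divide; since the $x$-part of each summand is a pure power $x_r^{d_1}$, such a monomial annihilates $f$ unless at most one $X_r$ appears, with exponent $\le d_1$, and the $U$-part divides the corresponding $G_r$. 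This immediately places families \eqref{2:1} (the $X_iX_j$ with $i<j$ and the $X_k^{d_1+1}$), \eqref{2:2} (everything of $u$-degree $>d_2$), \eqref{2:3}, \eqref{2:4}, \eqref{2:5} inside $I$: in each case the monomial in question sends $f$ to $0$ for the divisibility/skeleton reason spelled out, via Proposition~\ref{prop:xx} translating "$u_1^{s_1}\dotsm u_m^{s_m}$ divides $g_r$'' into "$X_{u_1^{s_1}\dotsm u_m^{s_m}}\subset X_{g_r}$''. For the binomials \eqref{2:6}: if $g_{r,s}=\GCD(g_r,g_s)$ and $g_r=u_1^{\rho_1}\dotsm u_m^{\rho_m}g_{r,s}$, $g_s=u_1^{\sigma_1}\dotsm u_m^{\sigma_m}g_{r,s}$, then $X_r^{d_1}U_1^{\rho_1}\dotsm U_m^{\rho_m}(f)=c\,G_{r,s}=X_s^{d_1}U_1^{\sigma_1}\dotsm U_m^{\sigma_m}(f)$ for the appropriate constant $c$ coming from differentiating monomials, so the difference lies in $I$; one checks the constants match (they are equal because both equal $(d_1!)\cdot(\text{the multinomial coefficient of }g_{r,s}\text{ inside }g_r$, $g_s$ respectively)—here one must be slightly careful and may need to rescale, which I will address below).

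Having shown all listed operators lie in $I$, I would let $J$ denote the ideal they generate and show $J=I$ by proving $\dim_\KK (T/J)_{(i,j)}\le a_{i,j}=\dim_\KK A_{(i,j)}$ for every bidegree $(i,j)$; since $J\subseteq I$ this forces equality. The bound is obtained by showing that, modulo $J$, every monomial of $T_{(i,j)}$ is either zero or congruent to one of the basis elements of $A_{(i,j)}$ listed in Theorem~\ref{thm:1}. Concretely: a monomial with two distinct $X$'s or with some $X_k^{d_1+1}$ is killed by \eqref{2:1}; a monomial of $u$-degree $>d_2$ is killed by \eqref{2:2}; among the remaining monomials $X_r^i U^{(\underline s)}$ with $0\le i\le d_1$, those whose $U$-part corresponds to an element of the $(j-1)$-skeleton of $P(m)$ \emph{not} lying in $X_{g_r}$ (equivalently $u^{(\underline s)}\nmid g_r$) are killed by \eqref{2:3}--\eqref{2:5}; the survivors are exactly the monomials attached to $(j-1)$-skeleton elements of $X_{g_r}$. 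For $i<d_1$ these are precisely the basis \eqref{1:1}/\eqref{1:2} of Theorem~\ref{thm:1}, and there is nothing more to quotient by, so the count is exact. For $i=d_1$ the surviving monomials $X_r^{d_1}U^{(\underline s)}$ are redundant: using the binomials \eqref{2:6} repeatedly along the gluing pattern of $\Delta_f$, any two whose $U$-parts, pushed forward by the differentiation, land on the same element $X_{g_{r,s}}$ of the $(d_2-j-1)$-skeleton of $\Delta_f$ are identified modulo $J$; hence the number of independent classes in $(T/J)_{(d_1,j)}$ is at most the number of elements of the $(d_2-j-1)$-skeleton of $\Delta_f$, which is $f_{d_2-j}=a_{d_1,j}$ by Remark~\ref{rmk:41}. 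Combining the three ranges gives $\dim(T/J)_{(i,j)}\le a_{i,j}$ for all $(i,j)$, and we are done.

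The main obstacle is the $i=d_1$ stratum and specifically the bookkeeping of the binomial relations \eqref{2:6}. Two points need care. First, one must verify that the binomials really do identify \emph{all} $X_r^{d_1}U^{(\underline s)}$ that map to proportional multiples of the same $G_{g_{r,s}}(f)$, and that they do not over-identify—i.e. that the quotient still surjects onto $A_{(d_1,j)}\cong A_{(0,d_2-j)}^\vee$; this is exactly a connectivity statement about the CW-subcomplex $\Delta_f$, saying that the "incidence graph'' whose vertices are the facets $X_{g_r}$ containing a fixed lower skeleton element is connected, which follows from the way $\Delta_f$ is glued in Construction~\ref{con:1} (all the pieces are attached along common skeletons inside $\widetilde{\Delta(m)}$). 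Second, the constant-coefficient check in \eqref{2:6}: differentiating $u_1^{a_1}\dotsm u_m^{a_m}$ by $U_1^{b_1}\dotsm U_m^{b_m}$ with $b_k\le a_k$ yields $\prod_k \frac{a_k!}{(a_k-b_k)!}\, u_1^{a_1-b_1}\dotsm u_m^{a_m-b_m}$; applying $X_r^{d_1}U_1^{\rho_1}\dotsm U_m^{\rho_m}$ to $x_r^{d_1}g_r$ gives $d_1!\cdot\big(\prod_k \tfrac{(g_r)_k!}{(g_{r,s})_k!}\big)\,G_{g_{r,s}}$ where $(g_r)_k$ denotes the exponent of $u_k$ in $g_r$; the two sides of the binomial therefore differ by the ratio $\prod_k \tfrac{(g_r)_k!}{(g_{r,s})_k!}\big/\prod_k \tfrac{(g_s)_k!}{(g_{r,s})_k!}$, so strictly speaking one should either rescale the binomial by this (nonzero, characteristic-$0$) constant or note that after rescaling the $U$-monomial basis one may take the binomials in the stated monic form; I would include a one-line remark that in characteristic $0$ this rescaling is harmless and the generating set is as stated. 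Apart from this, all steps are the same type of "apply the operator, read off divisibility, translate via Proposition~\ref{prop:xx}'' argument already used in the proof of Theorem~\ref{thm:1}, so no new technique is required.
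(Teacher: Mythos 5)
Your proof follows essentially the same route as the paper's: both rest on the explicit bases of the $A_{(i,j)}$ from Theorem~\ref{thm:1} together with the exact sequence \eqref{SES}, working bidegree by bidegree (the paper via the intermediate ideals $\beta\subset\gamma\subset I$, you via the equivalent dimension count $\dim_\KK(T/J)_{(i,j)}\le a_{i,j}$). Your remark on the multinomial constants in case \eqref{2:6} is well taken: the paper's proof reduces membership of the binomial to the equality $U_1^{\rho_1}\dotsm U_m^{\rho_m}(g_r)=U_1^{\sigma_1}\dotsm U_m^{\sigma_m}(g_s)$ but never checks that the monic binomials satisfy it when the $g_r$ are not square-free, so the characteristic-zero rescaling you describe is genuinely needed there and is a point of care your write-up adds over the paper's.
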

\begin{proof}
    Let  $A:=T/I$, where
  $T=\KK[X_0,\dotsc,X_n,U_1,\dotsc,U_m]$.
  
  By Theorem \ref{thm:1}, \eqref{1:1} a basis for $A_{(0,j)}$,
  $\forall j\in\{1,\dotsc,d_2\}$, is
  $\{\Omega_1,\dotsc,\Omega_{f_j}\}$, where
  $\Omega_s:=U_{1}^{s_1}\dotsm U_m^{s_{m}}$, with $s_1+\dotsm+s_m=j$,
  is associated to the element $X_{u_{1}^{s_1}\dotsm u_m^{s_{m}}}$ of
  the $(j-1)$-skeleton of $\Delta_f$. 
    Therefore, using the identification introduced in Remark
  \ref{rmk:1},a basis
  for 
  $I_{(0,j)}$ is given by
    the monomials $U_{1}^{s_1}\dotsm U_{m}^{s_m}$ such that
  $s_1+\dotsm+ s_m=j$, where $X_{u_{1}^{s_1}\dotsm u_{m}^{s_m}}$ is an
  element of the $(j-1)$-skeleton of $P(m)$ not contained in
  $\Delta_f$ (for $j\in\{1,\dotsc,d_2\}$);

   Observe that $X_i X_j(f)=0$ if $i\neq j$ and
  $X_k^{d_1+1}(f)=0=U_1^{i_1}\dotsm U_m^{i_m}(f)$ with
  $\sum_{j=1}^mi_j=d_2+1$, for degree reasons.  Set $$\beta:=
  (X_0X_1,\dotsc,X_{n-1}X_n,X_0^{d_1+1},\dotsc,X_n^{d_1+1},\gen{U_1,\dotsc,U_m}^{d_2+1});$$
  this is a homogeneous ideal such that $
  \beta\subset I$ and
  $\displaystyle A\cong\frac{T}{\beta}/\frac{I}{\beta}$.

  By Theorem \ref{thm:1}, \eqref{1:2}, if $i=1,\dotsc, d_1-1$, a basis
  for $A_{(i,j)}$ $\forall j\in\{1,\dotsc,d_2\}$, is given by
  $$\left\{\Omega_s^{i,s_1,\dotsc,s_m}\right\}_{\substack{s\in\{0,\dotsc,n\}\\
      s_k\le,r_k,k=1,\dotsc,m\\ \sum_k s_k=j}}$$ where
  $\Omega_s^{i,s_1,\dotsc,s_m}:=X_s^i\cdot U_{1}^{s_1}\dotsm
  U_m^{s_{m}}$ is associated to the element
  $X_{u_{1}^{s_1}\dotsm u_m^{s_{m}}}$ of the $(j-1)$-skeleton of
  $X_{g_s}$.

   Again using the identification introduced in Remark
  \ref{rmk:1}, a basis for
  $\displaystyle\left(\frac{I}{\beta}\right)_{(i,j)}$ is given by 
  \begin{itemize}
  \item\label{2:4} The monomials $X_r^i U_{1}^{s_1}\dotsm U_{m}^{s_m}$
    such that $s_1+\dotsm+ s_m=j$, with $r\neq s$, where
    $u_{1}^{s_1}\dotsm u_{m}^{s_m}$ divides $g_s$
    (i.e. $X_{u_{1}^{s_1}\dotsm u_{m}^{s_m}}$ is an element of the
    $(j-1)$-skeleton of $X_{g_s}$),
        for $i=1,\dotsc, d_1-1$, and
  \item\label{2:5} The monomials $X_s^i U_{1}^{r_1}\dotsm U_{m}^{r_m}$
    such that $r_1+\dotsm+ r_m=j$, where
    $u_{1}^{r_1}\dotsm u_{m}^{r_m}$ does not divide $g_s$ (i.e. the
    element of the $(j-1)$-skeleton of $P(m)$,
    $X_{u_{1}^{r_1}\dotsm u_{m}^{r_m}}$, is not contained in
    $X_{g_s}$),
      \end{itemize}
  for $j\in\{1,\dotsc,d_2\}$.

  It remains to find the generators of $I$ of bidegree $(d_1,j)$, with
  $j\in\{1,\dotsc,d_2\}$.  This is more complicated since the
  generators of $A_{(d_1,j)}$ are not monomials.  Let $\gamma$ be the
  homogeneous ideal generated by the monomials of the cases
  \eqref{2:1}, \eqref{2:2},
  \eqref{2:3}, \eqref{2:4} and \eqref{2:5}, i.e. the generators that
  we have found so far.  We have $
  \beta\subset\gamma\subset I$ and the exact sequence \eqref{SES}
  given by evaluation at $f$ becomes
    \begin{equation*}
    0\to \left( \frac{I}{\gamma}\right)_{(d_1,j)}\to \left(\frac{T}{\gamma}\right)_{(d_1,j)}\to A_{(0,d_2-j)}\to 0,
  \end{equation*}
  since we identify
  $\displaystyle A\cong\frac{T}{\gamma}/\frac{I}{\gamma}$.  Then, if
  $\rho_1+\dotsm+\rho_m=\sigma_1+\dotsm+\sigma_m=j$,
  $X_r^{d_1}U_1^{\rho_1}\dotsm
  U_m^{\rho_m}-X_s^{d_1}U_1^{\sigma_1}\dotsm
  U_m^{\sigma_m}\in\left(\frac{T}{\gamma}\right)_{(d_1,j)}$ is in
  $\displaystyle\left(\frac{I}{\gamma}\right)_{(d_1,j)}$ if and only
  if
  $X_r^{d_1}U_1^{\rho_1}\dotsm
  U_m^{\rho_m}=X_s^{d_1}U_1^{\sigma_1}\dotsm U_m^{\sigma_m}\in
  A_{(0,d_2-j)}$, which means
  $U_1^{\rho_1}\dotsm U_m^{\rho_m}(g_r)=U_1^{\sigma_1}\dotsm
  U_m^{\sigma_m}(g_s)$.  Since $A_{(0,d_2-j)}$ is generated by the
  monomials $\Omega_s:=U_{1}^{s_1}\dotsm U_m^{s_{m}}$, with
  $s_1+\dotsm+s_m=d_2-j$, associated to the elements of the
  $(d_2-j-1)$-skeleton of $\Delta_f$, we obtain case \eqref{2:6}.

  \end{proof}

As we have done for Theorem \ref{thm:1}, we give, as a corollary of
Theorem \ref{thm:2} the case of the simplicial Nagata polynomial,
giving an improvement of the second part of \cite[Theorem 3.5]{CGIM}; we
also correct that statement, since the authors forgot the 
generators $X_iX_j$, $i\neq j$.

\begin{corollary}\label{cor:2}
  Let $\displaystyle f=\sum_{r=0}^nx^{d_1}_rg_r\in R_{(d_1,d_2)}$,
  with $g_r=x_{r_1}\dotsm x_{r_{d_2}}$, be a simplicial Nagata
  polynomial whose associated 
  simplicial complex is ${\Delta}_f$, as in the preceding theorem.
 
  Then $I:=\Ann(f)$ is generated by:
  \begin{enumerate}
  \item\label{c2:1} $X_iX_j$ and $X_k^{d_1+1}$, for
    $i,j,k\in\{0,\dotsc,n\}$, $i<j$;
  \item\label{c2:2} $U_1^2,\dotsc,U_m^2$;
  \item\label{c2:3} The monomials $U_{s_1}\dotsm U_{s_j}$, where
    $\{u_{s_1},\dotsc, u_{s_j}\}$ is a (minimal) $(j-1)$-cell of
    $2^{\{u_1,\dotsc,u_m\}}$ not contained in ${\Delta}_f$ (for
    $j\in\{1,\dotsc,d_2\}$);
  \item\label{c2:4} The monomials $X_r U_i$, where
    $u_i$ 
    does not divide $g_s$ (i.e. $\{u_i\}\notin \Delta{g_r}$
    ); 
      \item\label{c2:6} The binomials $X_r^{d_1}U_{\rho_1}\dotsm
    U_{\rho_j}-X_s^{d_1}U_{\sigma_1}\dotsm
    U_{\sigma_j}$ such that
    $g_{r,s}\GCD(g_r,g_s)$, $g_r=u_{\rho_1}\dotsm
    u_{\rho_j}g_{r,s}$, $g_s=u_{\sigma_1}\dotsm
    u_{\sigma_j}g_{r,s}$ (i.e.
    $g_{r,s}$ represents the
    $(d_2-j-1)$-face given by the intersection
    $\Delta_{g_r}\cap\Delta_{g_s}$ of the facets of $g_r$ and
    $g_s$: $\Delta_{g_{r,s}}= \Delta_{g_r}\cap\Delta_{g_s}$).

  \end{enumerate}
\end{corollary}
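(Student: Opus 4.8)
The plan is to obtain Corollary~\ref{cor:2} as a direct specialisation of Theorem~\ref{thm:2} to the square-free case, translating each of its six generator families into the language of abstract simplicial complexes. The first step is the observation that if every $g_r=u_{r_1}\dotsm u_{r_{d_2}}$ is square free, then all exponents appearing in $g_r$ are $\leq 1$, so Construction~\ref{con:1} performs no identification on $X_{g_r}$: the CW-complex $X_{g_r}$ is simply the geometric realisation of the $(d_2-1)$-simplex on $\{u_{r_1},\dotsc,u_{r_{d_2}}\}$ (cf.\ Example~\ref{ex:1}), $\Delta_f$ coincides with the geometric realisation of the simplicial complex of Remark~\ref{rem1}, and the only part of $P(m)$ that intervenes is the topological simplex $\widetilde{\Delta(m)}$. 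Under this dictionary an element of the $(j-1)$-skeleton of $\Delta_f$ (resp.\ of $X_{g_r}$) becomes an ordinary $(j-1)$-face of $\Delta_f$ (resp.\ of the facet $\Delta_{g_r}$), and the divisibility ``$u_1^{s_1}\dotsm u_m^{s_m}\mid g_r$'' becomes ``$\{u_{s_1},\dotsc,u_{s_j}\}$ is a face of $\Delta_{g_r}$''. I would record these translations and then rewrite the generator list of Theorem~\ref{thm:2} accordingly.

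The translation I expect runs as follows. Family~\eqref{2:1} is already stated in the $X_i$ alone, so it becomes~\eqref{c2:1} verbatim (and, as in the proof of Theorem~\ref{thm:2}, $X_iX_j(f)=0$ for $i\neq j$ and $X_k^{d_1+1}(f)=0$ for degree reasons: these are precisely the generators missing from \cite[Theorem~3.5]{CGIM}). Family~\eqref{2:2}, namely $\gen{U_1,\dotsc,U_m}^{d_2+1}$, splits according to whether a degree-$(d_2+1)$ monomial in the $U_i$ has a repeated factor: the non-square-free ones lie in $\gen{U_1^2,\dotsc,U_m^2}$, which is family~\eqref{c2:2}, while the square-free ones correspond to $d_2$-faces of $2^{\{u_1,\dotsc,u_m\}}$, none of which lies in the $(d_2-1)$-dimensional $\Delta_f$, so they fall under family~\eqref{c2:3}. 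Family~\eqref{2:3} specialises to~\eqref{c2:3}, since a non-square-free monomial already belongs to $\gen{U_1^2,\dotsc,U_m^2}$ and it suffices to keep the square-free monomials $U_{s_1}\dotsm U_{s_j}$ indexed by minimal non-faces of $\Delta_f$. Family~\eqref{2:4} is~\eqref{c2:4} verbatim. Family~\eqref{2:5} turns out to be redundant: if $u_1^{r_1}\dotsm u_m^{r_m}$ is \emph{minimal} among the monomials not dividing the square-free $g_s$, then either it is non-square-free---whence $U_1^{r_1}\dotsm U_m^{r_m}\in\gen{U_1^2,\dotsc,U_m^2}$---or it is square free, and then minimality forces it to be a single variable $u_i$ with $u_i\nmid g_s$, so $X_sU_1^{r_1}\dotsm U_m^{r_m}$ is already in~\eqref{2:4}; this explains why Corollary~\ref{cor:2} has no analogue of~\eqref{2:5}. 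Finally family~\eqref{2:6} becomes~\eqref{c2:6}, with $g_{r,s}=\GCD(g_r,g_s)$ square free and the exponent vectors recording the complementary vertex sets of $g_r$ and $g_s$ over $g_{r,s}$.

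The delicate points will be precisely the reduction of~\eqref{2:2} and the redundancy of~\eqref{2:5}: both come down to the elementary fact that, among the monomials not dividing a fixed square-free monomial, the ones minimal for divisibility are the unused single variables together with the squares $U_i^2$. For the square-free monomials of degree $d_2+1$ coming from~\eqref{2:2} one should read~\eqref{c2:3} as ranging over all minimal non-faces of $\Delta_f$ inside $2^{\{u_1,\dotsc,u_m\}}$ (equivalently, allow $j=d_2+1$), since a $d_2$-face all of whose facets lie in $\Delta_f$ is itself a minimal non-face; alternatively one simply adjoins these finitely many monomials to~\eqref{c2:2}. Apart from this bookkeeping nothing new is required: a simplicial Nagata polynomial is in particular a CW-Nagata polynomial, so the whole argument of Theorem~\ref{thm:2}---the ascending chain $\beta\subset\gamma\subset I$ of homogeneous ideals and the computation of $(I/\gamma)_{(d_1,j)}$ from the exact sequence~\eqref{SES}---goes through unchanged, and one concludes that $I=\Ann(f)$ is generated by~\eqref{c2:1}--\eqref{c2:6}. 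The main obstacle is thus purely combinatorial: checking that the translation of each family is exhaustive, so that no generator of $\Ann(f)$ is lost in passing from the CW-complex description of Theorem~\ref{thm:2} to the simplicial-complex one.
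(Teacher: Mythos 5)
Your argument is correct and is essentially the paper's: the published proof consists of the single remark that everything follows from Theorem~\ref{thm:2} once the squares $U_1^2,\dotsc,U_m^2$ are adjoined, and your case-by-case translation (including the redundancy of family~\eqref{2:5} in the square-free setting and the absorption of the non-square-free part of $\gen{U_1,\dotsc,U_m}^{d_2+1}$ into $\gen{U_1^2,\dotsc,U_m^2}$) simply makes that remark explicit. You in fact catch two points the paper elides: the squares $U_i^2$ arise from case~\eqref{2:3} of Theorem~\ref{thm:2}, as minimal elements of the $1$-skeleton of $P(m)$ not contained in $\Delta_f$, rather than from case~\eqref{2:2} as the printed proof asserts (they have degree $2\le d_2$, not $d_2+1$); and the square-free minimal non-faces of degree $d_2+1$ (e.g.\ $U_1U_2U_3$ for $f=x_0^{d_1}u_1u_2+x_1^{d_1}u_1u_3+x_2^{d_1}u_2u_3$) are genuinely lost if $j$ is capped at $d_2$ in item~\eqref{c2:3}, so the range there must be extended to $d_2+1$ exactly as you propose.
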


\begin{proof}
  We note only that we have to add the squares of case \eqref{c2:2}
  although they do not correspond to cells, since the polynomials
  $g_i$  are square-free. The rest follows  from Theorem
  \ref{thm:2}. We observe that these squares are in case \eqref{2:2} of Theorem
  \ref{thm:2}.
\end{proof}

\begin{example}
  Let
  \begin{displaymath}
    f=x_0^du_1u_2u_3+x_1^du_1u_2u_4+x_2^du_1u_4u_5+x_3^du_1u_3u_5+x_4^du_2u_3u_6+x_5^du_2u_4u_6+x_6^du_4u_5u_6+x_7^du_3u_5u_6
  \end{displaymath} 
  be a bihomogeneous bidegree $(d,3)$ polynomial with $d\geq1$; it is
  a simplicial Nagata polynomial, whose associated simplicial complex
  is in the following figure:
  \begin{center}
    \begin{tikzpicture}[scale=0.50]
      \path (0,0) edge[dashed, left] (2,2); \path (2,2) edge[dashed,
      right] (6,2); \path (0,0) edge[ left] (4,0); \path (4,0) edge[
      right] (6,2); \path (4,0) edge[ left] (3,-4); \path (3,-4) edge[
      right] (6,2); \path (4,0) edge[ right] (3,5); \path (3,5) edge[
      left] (6,2); \path (0,0) edge[ left] (3,5); \path (0,0) edge[
      right] (3,-4); \path (2,2) edge[dashed] (3,-4); \path (2,2)
      edge[dashed, right] (3,5); \fill (0,0) circle(3pt); \fill (2,2)
      circle(3pt); \fill (4,0) circle(3pt); \fill (6,2) circle(3pt);
      \fill (3,5) circle(3pt); \fill (3,-4) circle(3pt); \node at
      (3.2,5.25) {$u_1$}; \node at (6.35,2.25){$u_2$}; \node at
      (3.5,-0.35){$u_3$}; \node at (1.5,2.25){$u_4$}; \node at
      (-0.4,-0.2){$u_5$}; \node at (3.2,-4.35){$u_6$}; \node at
      (4.50,2.5){$x^d_0$}; \node at (5,4.9){$x^d_1$}; \node at
      (0,3){$x^d_2$}; \node at (2.5,1.50) {$x^d_3$}; \node at
      (4.25,-0.75) {$x^d_4$}; \node at (7.3,0) {$x^d_5$}; \node at
      (0.45,-3){$x^d_6$}; \node at (2.5,-1.5){$x^d_7$}; \path
      (4.65,4.65) edge[->,bend right] (4.25,3.45); \path (0.2,3)
      edge[->, bend left] (1,2.5); \path (7,0) edge[->,bend left]
      (5.2,0); \path (0.7,-3) edge[->,bend right] (1.3,-2);
    \end{tikzpicture}
  \end{center}
  We have:
  \begin{displaymath}
    A=A_0\oplus A_1\oplus\hdots\oplus A_{d+3}.
  \end{displaymath}
We want firstly to compute the Hilbert vector by applying
  Corollary \ref{cor:1}; first of all,
  \begin{align*}
    a_{1,0}&=8 & a_{0,1}&=6,
  \end{align*}
  and therefore
  \begin{align*}
    h_0&=h_{d+3}=1 \\
    h_1&=h_{d+2}=a_{1,0}+a_{0,1}=8+6=14.
  \end{align*} 
  Then, we analyze the possible cases depending on the degree $d$:
  \begin{itemize}
  \item If $d=1$, then
    \begin{align*}
      a_{1,1}&=8\cdot3=24\\
      a_{0,2}&=12\\
      h_2&=a_{1,1}+a_{0,2}=36
    \end{align*}
    and the Hilbert vector is $(1,14,36,14,1)$.
  \item If $d=2$, then, recalling bigraded Poincar\'e duality,
    \begin{align*}
      a_{2,0}&=a_{0,3}=8 & a_{2,1}&=a_{0,2}=12 
    \end{align*}
    and therefore
    \begin{align*}
      h_2&=a_{2,0}+a_{1,1}+a_{0,2}=8+8\cdot3+12=44,\\
      h_3&=0+a_{2,1}+a_{1,2}+a_{0,3}=8+8\cdot3+8=44
    \end{align*}
    in accordance with Poincar\'e duality; so the Hilbert vector is
    $(1,14,44,44,14,1)$ (cfr.~\cite[Example 3.6]{CGIM}).
  \item If $d=3$, then, again by bigraded Poincar\'e duality,
    \begin{align*}
      a_{3,0}&= a_{0,3}=8, & a_{2,1}&=a_{1,2}=8\cdot3=24, & a_{3,1}&=a_{0,2}=12, &a_{2,2}&=a_{1,1}=24, &a_{1,3}&=a_{2,0}=8, 
    \end{align*}
    therefore
    \begin{align*}
      h_2&=a_{2,0}+a_{1,1}+a_{0,2}=44,\\
      h_3&=a_{3,0}+a_{2,1}+a_{1,2}+a_{0,3}=64\\
      h_4&=0+a_{3,1}+a_{2,2}+a_{1,3}=44
    \end{align*}
    $h_2=h_4$ in accordance with Poincar\'e duality and the Hilbert
    vector is $(1,14,44,64,44,14,1)$.
  \item In general, let $d\geq4$; by hypothesis
    \begin{equation*}
      h_{d+1}=h_2=a_{2,0}+a_{1,1}+a_{0,2}=44,
    \end{equation*}
    and
    \begin{align*}
      h_k&=a_{k,0}+a_{k-1,1}+a_{k-2,2}+a_{k-3,3} & \forall k&\in\{3,\dotsc,d\}, 
    \end{align*}
    where
    \begin{align*}
      a_{k,0}&=8 & a_{k-1,1}&=8\cdot3=24 & a_{k-2,2}&=8\cdot3=24 & a_{k,3}&=8. 
    \end{align*}
    Again using the Poincar\'e duality we have:
    \begin{align*}
      h_{d+3-k}&=h_k=64 & \forall k&\in\left\{3,\hdots,\left\lfloor\frac{d+3}{2}\right\rfloor\right\}
    \end{align*}
    and the Hilbert vector is $(1,14,44,64,\dotsc,64,44,14,1)$.
  \end{itemize}

  Now, we want to find the generators of $\Ann(f)$, by applying
  Corollary \ref{cor:2}. Behaviour depends on  $d$:

  \begin{itemize}
  \item If  $d=1$, by Corollary \ref{cor:2} $\Ann(f)$ is (minimally)
    generated by:
    \begin{enumerate}
    \item $\langle X_0,\hdots,X_7\rangle^2=X_0^2,X_0X_1,\dotsc;$
    \item $U_1^2,\hdots,U_6^2$;
    \item $U_1U_6,U_2U_5,U_3U_4$;
    \item $X_0U_4,X_0U_5,X_0U_6,X_1U_3,X_1U_5,X_1U_6,X_2U_2,X_2U_3,X_2U_6,X_3U_2,X_3U_4,X_3U_6,\\
      X_4U_1,X_4U_4,
      X_4U_5,X_5U_1,X_5U_3,X_5U_5,X_6U_1,X_6U_2,X_6U_3,X_7U_1,X_7U_2,X_7U_4$;
    \item $X_0U_3-X_1U_4,X_0U_2-X_3U_5,X_0U_1-X_4U_6,X_1U_2-X_2U_5,X_1U_1-X_5U_6,\\
      X_2U_4-X_3U_3,X_2U_1-X_6U_6,X_3U_1-X_7U_6,X_4U_3-X_5U_4,X_4U_2-X_7U_5,\\
      X_5U_2-X_6U_5,X_6U_4-X_7U_3$.
    \end{enumerate}

  \item If $d\geq2$, by Corollary \ref{cor:2} $\Ann(f)$ is
    (minimally) generated by
    \begin{enumerate}
    \item $\langle X_0,\hdots,X_7\rangle^{d+1}$ and $X_hX_k$ where
      $h,k\in\{0,\hdots,7\}$, $h<k$;
    \item $U_1^2,\hdots,U_6^2$;
    \item $U_1U_6,U_2U_5,U_3U_4$;
    \item $X^d_0U_4,X^d_0U_5,X^d_0U_6,X^d_1U_3,X^d_1U_5,X^d_1U_6,X^d_2U_2,X^d_2U_3,X^d_2U_6,X^d_3U_2,X^d_3U_4,X^d_3U_6,\\
      X^d_4U_1,X^d_4U_4,X^d_4U_5,X^d_5U_1,X^d_5U_3,X^d_5U_5,X^d_6U_1,X^d_6U_2,X^d_6U_3,X^d_7U_1,X^d_7U_2,X^d_7U_4$;
    \item $X^d_0U_3-X^d_1U_4,X^d_0U_2-X^d_3U_5,X^d_0U_1-X^d_4U_6,X^d_1U_2-X^d_2U_5,X^d_1U_1-X^d_5U_6,\\
      X^d_2U_4-X^d_3U_3,X^d_2U_1-X^d_6U_6,X^d_3U_1-X^d_7U_6,X^d_4U_3-X^d_5U_4,X^d_4U_2-X^d_7U_5,\\
      X^d_5U_2-X^d_6U_5,X^d_6U_4-X^d_7U_3$.
    \end{enumerate}
  \end{itemize}
\end{example}

\begin{example}
  Let
  \begin{displaymath}
    f=x_0^du_1u_2+x_1^du_1^2+x^d_2u_2u_3
  \end{displaymath}
  be a bihomogeneous bidegree $(d,2)$ polynomial, with $d\geq1$; it is
  a CW-Nagata polynomial whose CW-complex is the following:
  \begin{center}
    \begin{tikzpicture}[scale=0.5]
      \path (0,0) edge[left] (1,1); \path (1,1) edge[right] (2,2);
      \path (0,0) edge[right] (3,0); \node at (0.65,1.35) {$x_0^d$};
      \draw (2,3) circle (1cm); \node at (2,4.5) {$x_1^d$}; \fill
      (2,2) circle(3pt); \node at (2.5,1.75) {$u_1$}; \fill (0,0)
      circle(3pt); \node at (0,-0.5) {$u_2$}; \fill (3,0) circle(3pt);
      \node at (3.275,0.25) {$u_3$}; \node at (2,-0.5) {$x_2^d$};
    \end{tikzpicture}
  \end{center}
  We have:
  \begin{displaymath}
    A=A_0\oplus A_1\oplus\hdots\oplus A_{d+2}
  \end{displaymath}
  and we want to find its Hilbert vector; first of all,
  \begin{align*}
    a_{1,0}&=3 & a_{0,1}&=3
  \end{align*}
  and therefore
  \begin{align*}
    h_0&=h_{d+2}=1 & h_1&=h_{d+1}=a_{1,0}+a_{0,1}=6. 
  \end{align*}
  Therefore, if $d=1$, then Hilbert vector is $(1,6,6,1)$.

  If $d=2$, we have
  \begin{equation*}
    a_{1,1}=2+1+2=5,
  \end{equation*}
  so
  \begin{equation*}
    h_2=\dim A_2=a_{2,0}+a_{1,1}+a_{0,2}=3+5+3=11
  \end{equation*}
  and the Hilbert vector is $(1,6,11,6,1)$.

  If $d=3$ then, by bigraded  Poincar\'e duality
  \begin{align*}
    a_{3,0}&= a_{0,2}=3 & a_{0,3}&=3
  \end{align*}
  so
  \begin{align*}
    h_2&=a_{2,0}+a_{1,1}+a_{0,2}=11\\
    h_3&=a_{3,0}+a_{2,1}+a_{1,2}+a_{0,3}=3+5+3=11
  \end{align*}
  and the Hilbert vector is $(1,6,11,11,6,1)$.

  In general, let $d\geq4$; by hypothesis
  \begin{equation*}
    h_d=h_2=a_{2,0}+a_{1,1}+a_{0,2}=11,
  \end{equation*}
  and
  \begin{align*}
    h_k&=\dim A_{(k,0)}+\dim A_{(k-1,1)}+\dim A_{(k-2,2)}& \forall k&\in\{3,\dotsc,d\},
  \end{align*}
  so, since
  \begin{align*}
    a_{k,0}&=3 & a_{k-1,1}&=5 & a_{k-2,2}&=3
  \end{align*}
  using Poincar\'e duality we have:
  \begin{align*}
    h_{d+2-k}&=h_k=a_{k,0}+a_{k-1,1}+a_{k-2,2}=11 & \forall k&\in\left\{3,\hdots,\left\lfloor\frac{d+2}{2}\right\rfloor\right\}, 
  \end{align*}
  and the Hilbert vector is $(1,6,11,\hdots,11,6,1)$. 

 Let $d=1$, by Theorem \ref{thm:2} $\Ann(f)$ is (minimally)
  generated by:
  \begin{itemize}
  \item $\langle X_0,X_1,X_2\rangle^2,U_2^2,U_3^2,U_1U_3,U_1^3$;
  \item $X_0U_1^2,X_0U_3,X_1U_2,X_1U_3,X_2U_1$;
  \item $X_0U_2-X_1U_1,X_0U_1-X_3U_3$.
  \end{itemize}
  Let $d\geq2$, by Theorem \ref{thm:2} $\Ann(f)$ is
  (minimally) generated by:
  \begin{itemize}
  \item
    $\langle
    X_0,X_1,X_2\rangle^{d+1},X_0X_1,X_0X_2,X_1X_2,U_2^2,U_3^2,U_1U_3,U_1^3$;
  \item $X^d_0U_1^2,X^d_0U_3,X^d_1U_2,X^d_1U_3,X^d_2U_1$;
  \item $X^d_0U_2-X^d_1U_1,X^d_0U_1-X^d_3U_3$.
  \end{itemize}
\end{example}


\begin{thebibliography}{20}



\bibitem{CGIM} A.~Cerminara,~R.~Gondim,~G.~Ilardi,~F.~Maddaloni~-
  \emph{Lefschetz Properties for Higher Nagata Idealizations}, Adv. in
  Appl. Math. \textbf{106} (2019) 37--56.
\bibitem{E:D} D.~Eisenbud (1995) \emph{Commutative Algebra with a View
    toward Algebraic Geometry}, Springer-Verlag.
\bibitem{F:S} S.~Faridi~- \emph{The facet ideal of a simplicial
    complex}, Manuscripta Mathematica \textbf{109} (2002) 159--174.
\bibitem{F:A} A.~Franchetta~- \emph{Sulle forme algebriche di $S_4$
    aventi l'hessiana indeterminata}, Rend. Mat. \textbf{13} (1954)
  1--6.
\bibitem{GR} R.~Gondim~- \emph{On Higher Hessians and the Lefschetz
    Properties}, J. of Algebra \textbf{219} (2017) 241--263.
\bibitem{GR:RF} R.~Gondim,~F.~Russo~- \emph{Cubic hypersurfaces with
    vanishing Hessian}, Pure Appl. Algebra \textbf{219} (2015)
  779--806.
\bibitem{G:R:S} R.~Gondim,~F.~Russo,~G.~Staglian\`o~-
  \emph{Hypersurfaces with vanishing hessian via Dual Cayley Trick},
  To appear in Journal of Pure and Applied Algebra.
\bibitem{G:Z} R.~Gondim, G.~Zappal\`a~- \emph{Lefschetz properties for
    Artinian Gorenstein algebras presented by quadrics}
  Proc. Amer. Math. Soc. \textbf{146} (2018), no. 3, 993--1003.
\bibitem{G:N} P.~Gordan,~M.~Noether~- \emph{Ueber die algebraischen
    Formen, deren Hesse'sche Determinante identisch verschwindet},
  Math. Ann. \textbf{10} (1876) 547--568.
\bibitem{HMMNWW}
  T.~Harima,~T.~Maeno,~H.~Morita,~Y.~Numata,~A.~Wachi,~J.~Watanabe
  (2013) \emph{The Lefschetz Properties}, Springer.
\bibitem{H:AE} A.~E.~Hatcher (2002) \emph{Algebraic topology},
  Cambridge University Press.
\bibitem{OH:1} O.~Hesse~- \emph{\"Uber die Bedingung, unter welche
    eine homogene ganze Function von $n$ unabh\"angigen Variabeln
    durch Line\"are Substitutionen von $n$ andern unabh\"ngigen
    Variabeln auf eine homogene Function sich zur\"uckf\"uhren
    l\"asst, die eine Variable weniger enth\"alt}, J. reine
  angew. Math. \textbf{42} (1851) 117--124.
\bibitem{OH:2} O.~Hesse~- \emph{Zur Theorie der ganzen homogenen
    Functionen}, J. reine angew. Math. \textbf{56} (1859) 263--269.




\bibitem{Mac2} F. H. S. ~ Macaulay~-  \emph{ The Algebraic Theory of Modular Systems}, Cambridge Univ.
Press, Cambridge, U. K. (1916); reprinted with a foreword by P.
Roberts, Cambridge Univ. Press, London and New York (1994).

\bibitem{M:W} T.~Maeno,~J.~Watanabe~- \emph{Lefschetz elements of
    Artinian Gorenstein algebras and Hessians of homogeneous
    polynomials}, Illinois J. Math. \textbf{53} (2009) 593--603.
\bibitem{P:U} U.~Perazzo~- \emph{Sulle variet\`a cubiche la cui
    hessiana svanisce identicamente}, Giornale di Matematiche
  (Battaglini) \textbf{38} (1900), 337--354.
\bibitem{SR:1} R.~Stanley~- \emph{Hilbert functions of graded
    algebras}, Adv. in Math. \textbf{28} (1978), 57--83.
\bibitem{SR:2} R.~Stanley~- \emph{Weyl groups, the hard Lefschetz
    theorem, and the Sperner property}, SIAM J. Algebraic Discrete
  Methods \textbf{1} (1980) 168-184.
\bibitem{SR:3} R.~Stanley~- \emph{Log-concave and unimodal sequences
    in algebra, combinatorics, and geometry}, Graph theory and its
  applications (1986) 500--535.
\bibitem{W:J} J.~Watanabe~- \emph{A remark on the Hessian of
    homogeneous polynomials}, Pure and Appl. Math. \textbf{119} (2000)
  171--178.
\end{thebibliography}
\end{document}